\documentclass{article}

\usepackage{graphicx} 
\usepackage{tikz} 
\usepackage{pdfpages} 
\usepackage{amsmath} 
\usepackage{mathtools} 
\usepackage{enumerate} 
\usepackage{amscd} 
\usepackage{stmaryrd} 
\usepackage{amsfonts} 
\usepackage{amsthm} 
\usepackage{amssymb} 
\usepackage{hyperref} 
\usepackage{ragged2e} 
\usepackage[utf8]{inputenc} 
\usepackage{float} 
\usepackage{enumitem} 
\newtheorem{theorem}{Theorem}
 
\newtheorem{lemma}[theorem]{Lemma} 
\newtheorem{prop}{Proposition} 
  
\title{\textbf{Klein Quartic Curve and its Modularity}} 
\author{PARESH SINGH ARORA} 
\date{} 
  
\begin{document} 
  
\maketitle 
  
\textsc{Abstract}: 
The local Zeta function of a variety encodes important information about the variety. From the works of Weil, Deligne, Dwork, and others, many things are known about the local Zeta function of a smooth projective variety. 
In this article, we find the local Zeta function for the Klein Quartic curve, $x^3y+y^3z+z^3x=0$, by realizing it as a quotient of degree 7 Fermat curve. We conclude by giving the associated modular forms via Galois representations.

\section{Introduction} 
  
The Klein Quartic curve is a genus three projective curve over $\mathbb{C}$, given by the equation $x^3y+y^3z+z^3x=0$. One easily observes that it is invariant under cyclic permutations of $(x,y,z)$. It has the highest possible number of symmetries according to Hurwitz's automorphism theorem \cite{Hurwitz}. The goal of this article is to compute the local Zeta function (and hence the global $L$-function) of the Klein Quartic curve and represent it in terms of modular forms.
  
\paragraph{} The global $L$-function of a variety $X$ over $\mathbb{Q}$, is defined by combining the local Zeta functions at all good primes, and the local Zeta function is constructed using the point-counting on $X$ over finite fields. 
For a prime number $p$, the quantity $N_{p^r}(X)$ represents the number of points on $X$ defined over the finite field $\mathbb{F}_{p^r}$. The local Zeta function of the variety $X$ reduced over $\mathbb{F}_p$, for a good prime $p$, is defined for a variable $
T$ as follows
\begin{equation}\label{zeta function of a variety} 
    Z_p(X;T)=\exp\left(\sum_{r=1}^{\infty}\frac{N_{p^r}(X)}{r} T^r\right), 
\end{equation}  
where $\displaystyle \exp(T)=\sum_{k=0}^{\infty}\frac{T^k}{k!}$. The Klein Quartic curve is the defining equation of the Modular curve $X(\Gamma(7)$) (for example see  \cite{YANG}), and hence its $L$-function is the product of $L$-functions of all three Hecke eigenforms on $\Gamma(7)$, that also happen to be CM forms. In this expository article, we establish the same fact by realizing the Klein Quartic curve as a quotient of degree-$7$ Fermat curve and looking through the perspective of Galois representations. 

\paragraph{} 
 The Galois representation of smooth projective varieties is often connected to automorphic forms. For example, elliptic curves over rationals are related to weight $2$ modular forms, and their relation is given by the much-celebrated Modularity theorem \cite[Chapter~8]{Diamond-Shurman}. The Klein Quartic curve is a genus three curve and has a $6$-dimensional Galois representation associated with it, by the absolute Galois group $G_{\mathbb{Q}}=Gal(\overline{\mathbb{Q}}/\mathbb{Q})$ acting on its first \'etale cohomology group. It is natural to ask if there are any modular forms associated with this Galois representation. If this is the case, then we have a way to extend the associated $L$-function analytically over the complex plane. This inquiry, in turn, leads us to the following theorem. 
\begin{theorem}\label{theorem 1} 
Let $S=$ \{newforms of level 49, weight 2 and admitting complex multiplication by the field $\mathbb{Q}(\sqrt{-7})\}$, where $|S|=3$. For $f\in S$, let $\chi_f$ be its corresponding character, then for a prime $p\neq 7$, 
\begin{equation}\label{MF equation1} 
Z_p(KQ;T)=\frac{\prod_{f\in S} (1-a_p(f)T+\chi_f(p)\cdot pT^2)}{(1-T)(1-pT)}. 
\end{equation} 
 Here, characters $\chi_f$ are Dirichlet characters modulo $49$ of order $1$ or $3$.
\end{theorem}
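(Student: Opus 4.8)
The plan is to recover $KQ$ as an unramified $\mathbb{Z}/7$-quotient of the degree-$7$ Fermat curve and to read off the Frobenius eigenvalues through this cover. Let $F_7\subset\mathbb{P}^2$ be the smooth curve $u^7+v^7+w^7=0$ of genus $15$, and let $\sigma$ be the automorphism $(u:v:w)\mapsto(\zeta_7 u:\zeta_7^2 v:\zeta_7^4 w)$; since the exponents $1,2,4$ are the quadratic residues modulo $7$ and sum to $0$, $\sigma$ preserves $F_7$, and a quick case check shows it has no fixed point on $F_7$, so $G=\langle\sigma\rangle\cong\mathbb{Z}/7$ acts freely and, by Riemann--Hurwitz, $F_7/G$ has genus $3$. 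First I would verify that $F_7/G\cong KQ$ by computing the algebra of $G$-invariant degree-$7$ monomials (this produces the model $P^7=X^2Y^4Z$, $X+Y+Z=0$, a $\mathbb{Z}/7$-cover of $\mathbb{P}^1$ totally ramified over three points, hence the Klein quartic) and exhibiting the explicit change of coordinates. Since $\mathrm{Gal}(\mathbb{Q}(\zeta_7)/\mathbb{Q})$ carries $\sigma$ to a power of itself, $G$ is a $\mathbb{Q}$-rational subgroup scheme, the quotient map $\pi\colon F_7\to KQ$ is defined over $\mathbb{Q}$ and \'etale, and therefore, as $G_{\mathbb{Q}}$-representations, $H^1_{\mathrm{et}}(KQ_{\overline{\mathbb{Q}}},\mathbb{Q}_\ell)=H^1_{\mathrm{et}}(F_{7,\overline{\mathbb{Q}}},\mathbb{Q}_\ell)^{G}$.

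Next I would compute point counts cohomologically. The $\mu_7^3/\mu_7$-action decomposes $H^1(F_7,\overline{\mathbb{Q}}_\ell)=\bigoplus V_{r,s,t}$ into one-dimensional eigenspaces over triples $(r,s,t)$ with $r+s+t\equiv 0$ and $r,s,t\not\equiv 0\pmod 7$; as $\sigma$ acts on $V_{r,s,t}$ by $\zeta_7^{\,r+2s+4t}$, one gets $H^1(KQ,\overline{\mathbb{Q}}_\ell)=\bigoplus V_{r,s,t}$ over the six triples with $r+2s+4t\equiv 0$, a $6$-dimensional space. By Weil's classical evaluation of the zeta function of a Fermat curve, for a prime $p\equiv 1\pmod 7$ geometric Frobenius acts on $V_{r,s,t}$ by an explicit Jacobi sum in $\mathbb{Z}[\zeta_7]$ of absolute value $\sqrt p$; for an arbitrary $p\neq 7$ it permutes the $V_{r,s,t}$ via $(r,s,t)\mapsto(pr,ps,pt)$, and the characteristic polynomial $P_1(T)=\det(1-T\,\mathrm{Frob}_p\mid H^1(KQ))$ lies in $\mathbb{Z}[T]$. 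The Grothendieck--Lefschetz trace formula, together with the contributions $(1-T)$ and $(1-pT)$ of $H^0$ and $H^2$, then gives $Z_p(KQ;T)=P_1(T)/\bigl((1-T)(1-pT)\bigr)$, so everything reduces to identifying $P_1(T)$.

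The core step, and the main obstacle, is to show that the Jacobi-sum Gr\"ossencharaktere of $\mathbb{Q}(\zeta_7)$ attached to our six triples actually descend to the quadratic subfield $K=\mathbb{Q}(\sqrt{-7})$. The subgroup $\mathrm{Gal}(\mathbb{Q}(\zeta_7)/K)=\{1,2,4\}\subset(\mathbb{Z}/7)^{\times}$ splits the six triples into two orbits of size $3$, interchanged by complex conjugation; the eigenvalue on each $V_{r,s,t}$ is, up to a root of unity and $p$, a product $g(\chi^r)g(\chi^s)g(\chi^t)$ of order-$7$ Gauss sums, and for the Klein triples this product runs exactly over a coset of the quadratic residues. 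Using Stickelberger's theorem for the ideal factorisation of these Gauss sums in $\mathbb{Z}[\zeta_7]$, together with the Hasse--Davenport product relation and the evaluation $g(\chi_{\mathrm{quad}})=\sqrt{-7}$, I would show that $H^1(KQ)|_{G_K}$ is a sum of six $K$-rational characters, hence that over $\mathbb{Q}$ one has $H^1(KQ)=\mathrm{Ind}_K^{\mathbb{Q}}\psi_1\oplus\mathrm{Ind}_K^{\mathbb{Q}}\psi_2\oplus\mathrm{Ind}_K^{\mathbb{Q}}\psi_3$ for algebraic Hecke characters $\psi_i$ of $K$ of $\infty$-type $1$ and conductor dividing $(\sqrt{-7})$. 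This descent is special to the residue pattern $(1,2,4)$: a generic Fermat quotient only yields Hecke characters of $\mathbb{Q}(\zeta_7)$ or of an intermediate cubic field, and pinning down the exact conductor (so that the level comes out to be $49$ rather than a proper divisor) requires care. An alternative route to the same conclusion is to exhibit an explicit $\mathbb{Q}$-isogeny $\mathrm{Jac}(KQ)\sim E_{49}\times A$ with $E_{49}$ the conductor-$49$ CM elliptic curve and $A$ a $\mathbb{Q}$-simple abelian surface of $\mathrm{GL}_2$-type, and quote their modularity.

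Finally I would match these with $S$. Since $h_K=1$, $(\mathcal{O}_K/\sqrt{-7})^{\times}\cong\mathbb{F}_7^{\times}$, and $\mathcal{O}_K^{\times}=\{\pm1\}$, the Hecke characters of $K$ of $\infty$-type $1$ and conductor dividing $(\sqrt{-7})$ form a torsor under a group of order $3$: there are exactly three of them, with finite-order part of order $1$ or $3$. By the Hecke--Shimura--Ribet theory of CM modular forms, each $\psi_i$ is attached to a weight-$2$ newform $f_i$ with CM by $K$, of level $|d_K|\cdot N_{K/\mathbb{Q}}(\mathrm{cond}\,\psi_i)=7\cdot 7=49$, and nebentypus $\chi_{f_i}$ the mod-$49$ Dirichlet character determined by the finite part of $\psi_i$, hence of order $1$ or $3$; these are precisely the three elements of $S$. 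For $p\neq 7$ one has $a_p(f_i)=\mathrm{tr}\bigl(\mathrm{Frob}_p\mid\mathrm{Ind}_K^{\mathbb{Q}}\psi_i\bigr)$ and $\chi_{f_i}(p)\,p=\det\bigl(\mathrm{Frob}_p\mid\mathrm{Ind}_K^{\mathbb{Q}}\psi_i\bigr)$, so $P_1(T)=\prod_{f\in S}\bigl(1-a_p(f)T+\chi_f(p)pT^2\bigr)$, which combined with the preceding paragraph is exactly \eqref{MF equation1}.
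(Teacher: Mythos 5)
Your proposal is correct in outline, but it takes a genuinely different route from the paper, which never works with \'etale cohomology eigenspaces directly. The paper exhibits the explicit degree-$7$ unramified map $\phi(X,Y,Z)=(X^3Z,Y^3X,Z^3Y)$ from the Fermat curve, isolates the relevant orbit of Jacobi sums $J_q(\chi_7^{k},\chi_7^{2k})$ by elementary point counting, and gets your ``core step'' almost for free: since $J_q(\chi_7^i,\chi_7^j)=g_q(\chi_7^i)g_q(\chi_7^j)g_q(\chi_7^k)/q$ is symmetric in the exponents $(1,2,4)$, the sum is fixed by $\sigma_2$ and hence lies in $\mathbb{Q}(\sqrt{-7})$ already --- no Stickelberger or Hasse--Davenport product formula is needed, and the conductor $(1-\zeta_7)$ (hence level $7^6$ in total, i.e.\ $49$ per form) is pinned down by the congruence $J_q(\chi_7,\chi_7^2)=u+v\sqrt{-7}$, $u\equiv-1\bmod 7$, proved by a $7$-fold symmetry of the affine points. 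The paper then builds the Galois side by a two-step induction $G_{\mathbb{Q}(\zeta_7)}\to G_{\mathbb{Q}(\zeta_7+\overline{\zeta_7})}\to G_{\mathbb{Q}}$ and Clifford theory, invokes Langlands base change to write $L_{\mathbb{Q}}(\theta,s)=L_{\mathbb{Q}(\zeta_7)}(\rho,s)$, and finishes by matching Euler factors case-by-case in the order $r\in\{1,2,3,6\}$ of $p$ mod $7$, using the congruence lemma and the vanishing of the power sums $\sum_i\alpha_i^k$ for $k<r$. Your version instead descends directly to inductions of three Hecke characters of $\mathbb{Q}(\sqrt{-7})$ and uses the CM-form dictionary, which handles all unramified $p$ uniformly and makes the level/nebentypus bookkeeping cleaner; the paper's version is more elementary and self-contained (only Jacobi-sum identities and congruences). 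Two small points to fix in yours: the finite-order parts of the three Hecke characters of $\infty$-type $1$ and conductor dividing $(\sqrt{-7})$ are odd characters of $(\mathcal{O}_K/\sqrt{-7})^\times\cong\mathbb{F}_7^\times$, so they have order $2$ or $6$ (it is the nebentypus of the associated newform, not the finite part itself, that has order $1$ or $3$); and the exact conductor is forced to be $(\sqrt{-7})$ simply because conductor $(1)$ is incompatible with the unit $-1$, so the ``care'' you flag there amounts to one line.
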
 
  
\section{Preliminaries}  
\subsection{Gauss and Jacobi sums}\label{section Gauss sums}
We briefly introduce Gauss and Jacobi sums over finite fields and list the properties that we will use for counting points on curves. For a more detailed discussion of the topic, the reader can refer to \cite[Chapter~8]{Ireland-Rosen}.\\ 
Let $\mathbb{F}_q$ be a finite field with characteristic $p$, and $\widehat{\mathbb{F}_q^\times}$ be the group of multiplicative characters of $\mathbb{F}_q^\times$. We follow the convention $\chi(0)=0$, for a non-trivial multiplicative character, whereas for the trivial character $\epsilon_0$, we take $\epsilon_0(0)=1$. 
Then for a multiplicative character $\chi$, define the Gauss sum associated to $\chi$ as  
$$ g_q(\chi)=\sum_{x\in \mathbb{F}_q}\chi(x)\psi_q(x),$$ where $\psi_q(x)=\zeta_p^{Tr_{\mathbb{F}_q/\mathbb{F}_p}(x)}$, for a fixed choice of primitive $p^{th}$ root of unity $\zeta_p$, and $Tr_{\mathbb{F}_q/\mathbb{F}_p}(\cdot)$ being the trace function from $\mathbb{F}_q$ to $\mathbb{F}_p$.\\ 
The Jacobi sum for two characters, $\chi_1,\; \chi_2 \in \widehat{\mathbb{F}_q^\times}$ is defined as follows
$$J_q(\chi_1,\; \chi_2)=\sum_{x\in \mathbb{F}_q} \chi_1(x)\chi_2(1-x).$$

\paragraph{} 
Here are some properties of Gauss and Jacobi sums that can be easily verified from the definition (see \cite[Chapter~8]{Ireland-Rosen}). Let $\chi$ be a non-trivial character and $\epsilon_0$ be the trivial character. We have
\begin{enumerate} 
     \item\label{item 1} $J_q(\epsilon_0,\, \epsilon_0)=q$; \\$J_q(\epsilon_0,\,\chi)=0$; 
    \\ $J_q(\chi,\,{\chi}^{-1})=-\chi(-1)$;\\ $J_q(\chi_1,\,\chi_2)=J_q(\chi_2,\,\chi_1).$  
    \item $g_q(\epsilon_0)=0$; \\
    $g_q(\chi)g_q({\chi}^{-1})=\chi(-1)q$;\; $|g_q(\chi)|=\sqrt{q}$. 
    \item For $\chi_1\not=\chi_2^{-1}$, $J_q(\chi_1,\,\chi_2)=g_q(\chi_1)g_q(\chi_2)/g_q(\chi_1\chi_2).$ 
\end{enumerate} 
\textbf{Remark:} We can define the Jacobi sum for more than $2$ characters in a similar way, with $$ J_q(\chi_1,\chi_2,...,\chi_n)=\sum_{\substack{x_1,x_2,...,x_n\in\mathbb{F}_q\\ x_1+x_2+...+x_n=1}}
\chi_1(x_1)\chi_2(x_2)...\chi_n(x_n).$$ It satisfies analogous properties to the ones listed in \ref{item 1}. (see \cite[Chapter~8]{Ireland-Rosen}).\\ 
  
Before moving to the next section, we record a relation between counting points over finite fields and character sums. 
\begin{prop}\label{a} 
For a finite field $\mathbb{F}_q$, the number of solutions to the equation $x^m=a$, for a positive integer $m$ dividing $q-1$, is $\displaystyle\sum_{\substack{ \chi^m=1\\ \chi\in \widehat{\mathbb{F}_q^\times}} }\chi(a).$ 
\end{prop}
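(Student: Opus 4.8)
The plan is to treat the cases $a=0$ and $a\in\mathbb{F}_q^\times$ separately, evaluating both sides of the claimed identity explicitly in each. A preliminary observation used throughout: since $\widehat{\mathbb{F}_q^\times}$ is cyclic of order $q-1$ and $m\mid q-1$, the set $H=\{\chi\in\widehat{\mathbb{F}_q^\times}:\chi^m=1\}$ is exactly the unique subgroup of order $m$ (in a cyclic group of order $n$, the equation $\chi^m=1$ has $\gcd(m,n)$ solutions, and here $\gcd(m,q-1)=m$). If $a=0$, the equation $x^m=0$ has the single solution $x=0$; on the right-hand side the stated conventions give $\epsilon_0(0)=1$ and $\chi(0)=0$ for every nontrivial $\chi\in H$, so the sum is $1$, matching the count.

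Now suppose $a\in\mathbb{F}_q^\times$. On the counting side, $x\mapsto x^m$ is an endomorphism of the cyclic group $\mathbb{F}_q^\times$ whose kernel (the group of $m$-th roots of unity) has order $\gcd(m,q-1)=m$; hence every non-empty fiber is a coset of this kernel and has exactly $m$ elements, so $x^m=a$ has $m$ solutions if $a$ is an $m$-th power in $\mathbb{F}_q^\times$ and none otherwise. On the character side, fix a generator $g$ of $\mathbb{F}_q^\times$ and let $\rho\in\widehat{\mathbb{F}_q^\times}$ be the character with $\rho(g)=\zeta_{q-1}$ for a fixed primitive $(q-1)$-th root of unity $\zeta_{q-1}\in\mathbb{C}$, so $\rho$ generates $\widehat{\mathbb{F}_q^\times}$ and $H=\{\rho^{i(q-1)/m}:0\le i\le m-1\}$. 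Writing $a=g^k$, one computes $\sum_{\chi\in H}\chi(a)=\sum_{i=0}^{m-1}\zeta^{ik}$ with $\zeta=\zeta_{q-1}^{(q-1)/m}$ a primitive $m$-th root of unity; this geometric sum equals $m$ when $m\mid k$ and $0$ otherwise, and $m\mid k$ holds precisely when $a=g^k$ lies in $(\mathbb{F}_q^\times)^m$. Comparing the two sides completes the argument.

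A coordinate-free variant of the last step: if $a=b^m$ then $\chi(a)=\chi^m(b)=1$ for all $\chi\in H$, so the sum is $|H|=m$; if $a\notin(\mathbb{F}_q^\times)^m$, non-degeneracy of the pairing $\mathbb{F}_q^\times/(\mathbb{F}_q^\times)^m\times H\to\mathbb{C}^{\times}$ produces a $\chi_0\in H$ with $\chi_0(a)\neq 1$, and since $\chi\mapsto\chi_0\chi$ permutes $H$ we get $\chi_0(a)\sum_{\chi\in H}\chi(a)=\sum_{\chi\in H}\chi(a)$, forcing the sum to vanish. The only point that requires a little care is this separation property — equivalently, the explicit description of $H$ in terms of a generator — while everything else is just the orthogonality of characters on a finite cyclic group; accordingly I do not anticipate any substantial obstacle.
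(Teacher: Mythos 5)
Your proposal is correct and follows essentially the same route as the paper: the solution count comes from the fibers of the $m$-th power map being cosets of the group of $m$-th roots of unity, and the character sum is evaluated by orthogonality of the order-$m$ subgroup of $\widehat{\mathbb{F}_q^\times}$, giving $m$ or $0$ according to whether $a$ is an $m$-th power. You additionally spell out the $a=0$ case and make the orthogonality step explicit (generator computation or the $\chi_0$-shift trick), which the paper leaves implicit, but there is no substantive difference in method.
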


\begin{proof} 
We work for the non-trivial case: $a\neq 1$. Observe that the set $M=\{\chi\in \widehat{\mathbb{F}_{q}^\times} \mid \chi^m=1  \}$ is an order $m$ cyclic subgroup of the group of characters $\widehat{\mathbb{F}_{q}^\times}$. Also, note if the equation $x^m=a$ has one solution $x_0$, then $\{x_0\zeta_m^i\mid\, i\in\{1,2,...,m\} \}$ is the complete solution set for $\zeta_m$ being a primitive $m$-th root of unity in $\mathbb{F}_q$. And by orthogonality relations of the group of characters, the expression $\displaystyle\sum_{\chi^m=1}\chi(a)$ is either  $m(=|M|)$ or $0$ depending on whether $x^m=a$ has a solution or not. This establishes the equality of the two expressions. 
  \end{proof} 

\textbf{Remark:} In the last proposition, the number of solutions for the case when $m\nmid (q-1)$ is the g.c.d. of $m$ and $q-1$. 
  
\subsection[Bookmark-safe title]{Point counting on $x^n+y^n=1$ in $\mathbb{F}_q$}\label{pointcounting} 
For $q$ an odd prime power with  $n\mid (q-1)$, we have  
$$N_q(x^n+y^n=1)=\sum_{a\in \mathbb{F}_q} N_q(x^n=a)N_q(y^n=1-a), $$ 
and by Proposition \ref{a},  
$$N_q(x^n=a)=\sum_{i=0}^{n-1}\chi_n^i(a)$$ for $\chi_n$ a fixed choice of order $n$   primitive character in $\widehat{\mathbb{F}_q^\times}$.\\  
Combining these, we get 
\begin{equation}\label{eq1} 
    N_q(x^n+y^n=1)=\sum_{j=0}^{n-1}\sum_{i=0}^{n-1}J_q(\chi_n^i,\chi_n^j). 
\end{equation} 
Using the properties of the Jacobi sums listed in the section \ref{section Gauss sums}, we rewrite the equation \eqref{eq1} as 
\begin{equation}\label{eq2} 
N_q(x^n+y^n=1)=q+1-\delta_n(-1)n +\sum_{\substack{ i,j\in (\mathbb{Z}/n\mathbb{Z})^* \\ i+j\neq n}}J_q(\chi_n^i,\chi_n^j). 
\end{equation} 
Where $\delta_n(-1)$ is the indicator function to whether $-1$ is a $n$-th power in the finite field and, $(\mathbb{Z}/n\mathbb{Z})^*=\mathbb{Z}/n\mathbb{Z}-\{0\}$ \\
It is worth noting that $\chi_n(\mathbb{F}_q^\times)\subset\{\zeta_n^i\mid 1\leq i\leq n\},$ and hence $J_q(\chi_n^i,\chi_n^j)\in \mathbb{Z}[\zeta_n]$, this turn out to be useful in later discussions.

\paragraph{Remark:} Equation \eqref{eq2} counts the number of points satisfying $x^n+y^n=1$ over the finite field $\mathbb{F}_q$. It can be extended to any finite extension $\mathbb{F}_{q^r}$ of $\mathbb{F}_q$, by extending $\chi_n$ to a primitive order $n$  character in the extension field by composing it with the norm function, namely $\chi'_n=\chi_n\circ N_{\mathbb{F}_{q^r}/\mathbb{F}_q}$, giving 
\begin{equation} 
    \begin{split} 
         N_{q^r}(x^n+y^n=1)&=q^r+1-\delta_n(-1)n - \sum_{\substack{i,j\in (\mathbb{Z}/n\mathbb{Z})^* \\ i+j\neq n}}(-J_{q^r}(\chi_n'^i,\chi_n'^j))\\ 
     &=q^r+1-\delta_n(-1)n - \sum_{\substack{i,j\in (\mathbb{Z}/n\mathbb{Z})^* \\ i+j\neq n}}(-J_{q}(\chi_n^i,\chi_n^j))^r 
    \end{split} 
\end{equation} 
for the number of solutions to the equation over $\mathbb{F}_{q^{r}}$. Here the last equality holds by  the Hasse-Davenport relation $(-g_q(\chi_n))^r=-g_{q^r}(\chi'_n)$ \cite[Chapter~11]{Ireland-Rosen}. 
  
\subsection{Zeta function of a variety} 
  
In this section, we will assume basic knowledge of algebraic geometry. The first two chapters of Hartshorne's book \cite{Hartshorne} provide a good reference if you are unfamiliar with the terminology. In this section and the following sections, whenever we speak of variety, it will always be smooth and projective variety. 
  
\paragraph{}By Weil conjectures \cite{Silverman}, for a projective non-singular variety $V$ defined over $\mathbb{F}_q$, the local Zeta function $Z_q(V;T)$ given by equation \eqref{zeta function of a variety} is a rational function in the variable $T$. The following proposition gives an equivalent statement to the local Zeta function being rational. 
\begin{prop}\label{zeta point counting} \cite[Chapter~11]{Ireland-Rosen}
The Zeta function of variety $V/\mathbb{F}_q$ is a rational function of $T$, and it is of the form $$Z_q(V;T)=\frac{\prod_{i=1}^{n}(1-\alpha_iT)}{\prod_{j=1}^m(1-\beta_jT)}$$ if, and only if, there exist complex numbers $\alpha_i$ and $\beta_j$ such that, for $r
\geq 1$,
$$N_{q^r}(V)=\sum_{j=1}^m\beta_j^r-\sum_{i=1}^n\alpha_i^r$$ for $m$ and $n$ positive integers. 
\end{prop}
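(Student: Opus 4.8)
The plan is to work entirely inside the ring of formal power series $\mathbb{C}[[T]]$, where $\exp$ and $\log$ restrict to mutually inverse bijections between the maximal ideal $T\,\mathbb{C}[[T]]$ and the coset $1+T\,\mathbb{C}[[T]]$, and where for every $\gamma\in\mathbb{C}$ one has the single formal identity $\log\!\big(1/(1-\gamma T)\big)=\sum_{r=1}^\infty \frac{\gamma^r}{r}T^r$. Taking the formal logarithm of the defining equation \eqref{zeta function of a variety} turns each side of the proposition into an identity of power series, and the whole argument then reduces to matching coefficients of $T^r$. No analytic convergence is needed since everything is formal.

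For the forward direction, suppose $Z_q(V;T)=\dfrac{\prod_{i=1}^n(1-\alpha_iT)}{\prod_{j=1}^m(1-\beta_jT)}$. Applying $\log$ to both sides, using \eqref{zeta function of a variety} on the left and expanding each factor on the right via the identity above, I obtain
\[
\sum_{r=1}^\infty \frac{N_{q^r}(V)}{r}\,T^r \;=\; \sum_{r=1}^\infty \frac{1}{r}\left(\sum_{j=1}^m \beta_j^{\,r}-\sum_{i=1}^n \alpha_i^{\,r}\right)T^r .
\]
Comparing the coefficient of $T^r$ and multiplying through by $r$ gives $N_{q^r}(V)=\sum_{j=1}^m \beta_j^{\,r}-\sum_{i=1}^n \alpha_i^{\,r}$ for all $r\geq 1$. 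For the converse I reverse this: assuming the point-count formula, substitute it into \eqref{zeta function of a variety}, interchange the (formal) summations over $r$ and over $j$ (resp. $i$)—legitimate since only finitely many terms contribute to each power $T^r$—and recognize each inner sum as a logarithm, obtaining $\log Z_q(V;T)=\sum_j \log\frac{1}{1-\beta_jT}-\sum_i \log\frac{1}{1-\alpha_iT}$. Exponentiating recovers $Z_q(V;T)=\dfrac{\prod_i(1-\alpha_iT)}{\prod_j(1-\beta_jT)}$, which is manifestly rational in $T$.

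I do not expect a genuine obstacle here; the content is formal power-series bookkeeping. The only points deserving a sentence of care are that all manipulations live in $\mathbb{C}[[T]]$ (so the $\log$/$\exp$ identities are purely formal), and that the double-sum rearrangement is valid because each coefficient of $T^r$ is a finite sum. I would also add a short remark that the representation of $Z_q(V;T)$ as such a ratio is not unique—one may freely insert or cancel a common factor $(1-\alpha T)/(1-\beta T)$ with $\alpha=\beta$—but since the proposition only asserts the \emph{existence} of suitable $\alpha_i,\beta_j$, this ambiguity is immaterial.
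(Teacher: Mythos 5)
Your proof is correct, and it is the standard formal power-series argument (take $\log$ of \eqref{zeta function of a variety}, expand each factor via $\log(1/(1-\gamma T))=\sum_r \gamma^r T^r/r$, and compare coefficients), which is exactly the route of the cited source \cite[Chapter~11]{Ireland-Rosen} that the paper relies on without reproducing a proof. Nothing further is needed.
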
 

As an example, let us calculate the Zeta function of the projective curve $FC_n: X^n+Y^n+Z^n=0$, which is the projective closure of the curve considered in the section \ref{pointcounting}. Now, $N_{q^r}(FC_n)=N_{q^r}(FC_n)_{Z\neq 0}+(\text {points at infinity})$. Where the number of points at infinity is the number of $(X,1,0)$ such that $X^n=-1$, that is $\delta_n(-1)n$. Therefore, for $q\equiv1\mod n$, equation \eqref{eq2} gives
$$N_{q^r}(FC_n)=q^r+1^r\ - \sum_{\substack{i,j\in (\mathbb{Z}/n\mathbb{Z})^* \\ i+j\neq n}}(-J_q(\chi_n^i,\chi_n^j))^r.$$ 
And hence,
\begin{equation}\label{zeta of n-fermat curve}
     Z_q(FC_n;T)=\frac{\prod_{\substack{i,j\in (\mathbb{Z}/n\mathbb{Z})^* \\ i+j\neq n}}(1+J_q(\chi_n^i,\chi_n^j)T)}{(1-T)(1-qT)}.
\end{equation}

\subsection{Weil conjectures} 
In 1949, Andr\'e Weil made a series of influential conjectures based on his observations of the local Zeta function of the Fermat-type hypersurfaces \cite{Weil-conjuctuers}.

\begin{theorem}[Weil Conjectures]\label{WC} 
Let $V/\mathbb{F}_q$ be a smooth projective variety of dimension $N$. 
\begin{enumerate} 
    \item[(a)] The Zeta function of $V$, $Z_q(V;T)$ is a rational function in variable $T$. 
    \item[(b)] There is an integer $\epsilon,$ called the Euler characteristic of $V,$ such that $$Z_q(V;1/(q^NT))=\pm q^{N\epsilon/2}T^\epsilon Z_q(V;T).$$ 
    \item[(c)] The Zeta function factors as $$ Z_q(V;T)=\frac{P_{q,1}(T)...P_{q,{2N-1}}(T)}{P_{q,0}(T)P_{q,2}(T)...P_{q,{2N}}(T)   },$$ 
    with each $P_{q,i}(T)\in \mathbb{Z}[T]$, with $$P_{q,0}(T)=1-T\hspace{1cm}and\hspace{1cm}P_{q,2N}(T)=1-q^{N}T,$$ 
    and such that for every $1\leq i\leq (2N-1)$, the polynomial $P_{q,i}(T)$ factors over $\mathbb{C}$ as $$P_{q,i}(T)=\prod_{j=1}^{b_i}(1-\alpha_{i,j}T)\hspace{1cm}with\hspace{1cm}|\alpha_{i,j}|=q^{i/2};$$ 
where $| \cdot |$ denotes any complex absolute value, and denoting the degree of $P_{q, i}(T)$ as $b_i$, where $b_i$ represents the $i^{th}$ Betti number of $V$, we note that it is independent of $q$.
\end{enumerate} 
  
\end{theorem}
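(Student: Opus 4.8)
The plan is to deduce all three parts from $\ell$-adic étale cohomology: rationality and the factorization come from the Grothendieck--Lefschetz trace formula, the functional equation from Poincaré duality, and the archimedean bounds from Deligne's theorem. Fix a prime $\ell\neq p$, set $\bar V=V\times_{\mathbb{F}_q}\overline{\mathbb{F}_q}$, and let $\phi$ denote the geometric Frobenius acting on the finite-dimensional $\mathbb{Q}_\ell$-vector spaces $H^i(\bar V,\mathbb{Q}_\ell)$ (étale cohomology), which vanish outside $0\leq i\leq 2N$ and have dimension $b_i$. Since the $\mathbb{F}_{q^r}$-points of $V$ are exactly the fixed points of $\phi^r$ on $\bar V$, the trace formula reads
\[
N_{q^r}(V)=\sum_{i=0}^{2N}(-1)^i\operatorname{Tr}\bigl(\phi^r\mid H^i(\bar V,\mathbb{Q}_\ell)\bigr).
\]
Putting $P_{q,i}(T)=\det\bigl(1-\phi T\mid H^i(\bar V,\mathbb{Q}_\ell)\bigr)=\prod_{j=1}^{b_i}(1-\alpha_{i,j}T)$ for the Frobenius eigenvalues $\alpha_{i,j}$, and using the identity $\exp\bigl(\sum_{r\geq1}\operatorname{Tr}(\phi^r\mid H^i)T^r/r\bigr)=P_{q,i}(T)^{-1}$, the definition \eqref{zeta function of a variety} yields
\[
Z_q(V;T)=\prod_{i=0}^{2N}P_{q,i}(T)^{(-1)^{i+1}}.
\]
This is a rational function, which proves (a) and the factorization shape of (c); the endpoints follow from $H^0(\bar V,\mathbb{Q}_\ell)=\mathbb{Q}_\ell$ with $\phi$ trivial, so $P_{q,0}(T)=1-T$, and $H^{2N}(\bar V,\mathbb{Q}_\ell)\cong\mathbb{Q}_\ell(-N)$ with $\phi$ acting by $q^N$, so $P_{q,2N}(T)=1-q^NT$. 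Equivalently one reads this off Proposition \ref{zeta point counting} with the even-degree eigenvalues as the $\beta_j$ and the odd-degree eigenvalues as the $\alpha_i$.

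For the integrality $P_{q,i}(T)\in\mathbb{Z}[T]$ and the independence of the Betti numbers $b_i$ from the auxiliary prime $\ell$, I would appeal to the standard properties of étale cohomology, namely that the characteristic polynomial of $\phi$ on $H^i$ has integer coefficients independent of $\ell$. The functional equation (b) then comes from Poincaré duality: the cup product furnishes a perfect $\phi$-equivariant pairing $H^i(\bar V,\mathbb{Q}_\ell)\times H^{2N-i}(\bar V,\mathbb{Q}_\ell)\to H^{2N}(\bar V,\mathbb{Q}_\ell)\cong\mathbb{Q}_\ell(-N)$, whence the eigenvalues on $H^{2N-i}$ are the numbers $q^N/\alpha_{i,j}$. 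Substituting $T\mapsto 1/(q^NT)$ into each factor $P_{q,i}$ and bookkeeping the powers of $q$ and $T$ produces
\[
Z_q\bigl(V;1/(q^NT)\bigr)=\pm q^{N\epsilon/2}T^{\epsilon}Z_q(V;T),\qquad \epsilon=\sum_{i=0}^{2N}(-1)^ib_i,
\]
which is exactly (b).

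The decisive and hardest remaining assertion is the Riemann Hypothesis bound $|\alpha_{i,j}|=q^{i/2}$ in (c); this is the main obstacle and, in full generality, rests on Deligne's proof of the Weil Riemann Hypothesis. For the curves relevant to this article the estimate is elementary, however: when $\dim V=N=1$ only $H^1$ is at issue, and comparing \eqref{zeta of n-fermat curve} with the factorization above identifies the Frobenius eigenvalues on $H^1$ of the Fermat curve with the Jacobi sums $-J_q(\chi_n^i,\chi_n^j)$. Writing $J_q(\chi_n^i,\chi_n^j)=g_q(\chi_n^i)g_q(\chi_n^j)/g_q(\chi_n^{i+j})$ and invoking $|g_q(\chi)|=\sqrt q$ from Section \ref{section Gauss sums} yields $|{-J_q(\chi_n^i,\chi_n^j)}|=q^{1/2}$, precisely the claimed bound. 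Thus for the Fermat and Klein quartic curves the whole of Theorem \ref{WC} can be obtained from the explicit point counts together with the Gauss-sum estimates already recorded, whereas the statement in arbitrary dimension depends on the cohomological formalism and Deligne's theorem.
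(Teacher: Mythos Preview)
The paper does not prove Theorem~\ref{WC}: it is stated as background, with the sentence immediately following it noting that ``these conjectures have been proved due to the work of Weil, Dwork, M.~Artin, Grothendieck, Deligne, and others'' and citing the literature. So there is no proof in the paper to compare against.

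Your sketch is the standard cohomological route and is correct as an outline; it is exactly the framework the paper alludes to when it remarks that the polynomials $P_{q,i}$ are characteristic polynomials of Frobenius on the $i$-th \'etale cohomology. One small caveat: your closing paragraph says the Riemann Hypothesis for curves ``is elementary'' via the Gauss-sum identity $|g_q(\chi)|=\sqrt{q}$, but that argument only covers the Fermat-type curves whose Frobenius eigenvalues are Jacobi sums, not arbitrary smooth projective curves. You do restrict to ``the curves relevant to this article,'' so the claim is fine in context, but be careful not to overstate it; for a general curve one still needs Weil's original argument or the cohomological machinery.
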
 

\paragraph{}These conjectures have been proved due to the work of Weil, Dwork, M. Artin, Grothendieck, Deligne, and others \cite{Deuring}, \cite{Dwork}, \cite{Katz}. Deligne's proof of Weil conjectures used \'etale cohomologies for varieties to study their geometric properties over finite fields. The polynomials $P_{q, i}(\cdot)$ mentioned in Weil conjectures are characteristic polynomials of the action of Frobenius automorphism (described in the next section) on the $i$-th cohomology groups of the variety. 

\paragraph{}Since we will be discussing only smooth projective curves, the numerator of the local Zeta function of the same would just be $P_{q,1}$, and we will denote its inverse roots as $\alpha_i$'s. Also, note that $\alpha_i$'s are implicitly dependent on the finite field $\mathbb{F}_q$ over which the curve is considered.

\paragraph{} Before moving ahead, let us look at an application for finding the modular form associated with a CM elliptic curve. 
Consider the case of the degree $3$ Fermat curve $FC_3$,  $X^3+Y^3+Z^3=0$. Since it is a genus $1$ projective curve over complex numbers, it is an elliptic curve.\\
\\
\sloppy For a prime power, $q\equiv 2\mod 3$, $N_q(FC_3)=N_q(X^3+Y^3=1)+\{\text{points at infinity}\},$ where $N(X^3+Y^3=1)=\sum_{a\in \mathbb{F}_q}N(X^3=a)N(Y^3=1-a),$ and since $(3,q-1)=1$, we have $N(X^3=a)=1$ for any $a\in \mathbb{F}_q$. Therefore, $$N_q(FC_3)=q+1,$$ where $1$ counts for the point at infinity $(1,-1,0)$.\\ 
\\
For $q\equiv 1\mod 3$, from section \ref{pointcounting} we get
\begin{equation}\label{number of point on FC_3}
    N_q(FC_3)=q+1+J_q(\chi_3,\chi_3)+\overline{J_q(\chi_3,\chi_3)}.
\end{equation} 
Let $\omega_3$ be a cubic root of unity in $\mathbb{F}_q$, then for a point $(x,y,z)$ on $FC_3,$ the set $\{(\omega_3^ix,\omega_3^jy,1)\mid 1\leq i,j\leq 3\}$ also lies on $FC_3$ over $\mathbb{F}_q$. This gives, $N_q(FC_3)\equiv 0\mod 3$, and from equation \ref{number of point on FC_3} we can see that $Re(J_q(\chi_3,\chi_3))\equiv -1\mod 3$.\\
By combining the congruence condition for the Jacobi sum over $\mathbb{F}_q$ with the fact that $J_q(\chi_3,\chi_3)\in \mathcal{O}_{\mathbb{Q}(\zeta_3)} (=\mathbb{Z}[(1+\sqrt{-3})/2])$ and $|J_{q^2}(\chi_3,\chi_3)|=q$, then for $q\equiv 2\mod 3$, we have $q^2\equiv 1\mod 3$ and we can deduce that $J_{q^2}(\chi_3,\chi_3)=q$.
\\
Hence, from Proposition \ref{zeta point counting} and Weil's conjectures, we have

\begin{equation*} 
    N_{q^r}(FC_3)=
    \begin{cases} 
    q^r+1 - (\iota\sqrt{q})^r- (\overline{\iota\sqrt{q}})^r &\text{ for } q\equiv 2\mod 3;\\ 
    q^r+1 -(-J_q(\chi_3,\chi_3))^r-(-\overline{J_q(\chi_3,\chi_3))^r} &\text{ for } q\equiv 1 \mod 3, 
    \end{cases} 
\end{equation*} 
where $\iota=\sqrt{-1}$. We calculate that this elliptic curve $FC_3$ has conductor $27$. The only weight $2$, level $27$ modular form with CM is one with LMFDB label $27.2.a.a$. Therefore, from the Modularity theorem \cite[Chapter~8]{Diamond-Shurman}, we have $$L(FC_3,s)=L(f_{27.2.a.a},s).$$

\subsection[Bookmark-safe title]{Hecke Characters and $L$-functions}\label{Hecke Characters}
Hecke characters for number fields are a natural generalization of the Dirichlet character and have an $L$-function associated with them, which also satisfies a functional equation. They can be defined over any number field, but we will restrict ourselves to CM fields as dealt with in the book \cite{Ireland-Rosen}. In this section, we give a brief introduction to this and consider an example that will be relevant in later sections.\\\\
The number field $K$ is called a CM field if it is a totally complex quadratic extension of a totally real field. Examples include $\mathbb{Q}(\sqrt{-d})/\mathbb{Q}$, where $d$ is a square-free positive integer; another important example would be cyclotomic fields, $\mathbb{Q}(\zeta_n)/\mathbb{Q}(\zeta_n+\overline{\zeta_n})$, where $\zeta_n$ is a primitive $n$-th root of unity.\\
\\
For a CM Galois field $K$, let $\mathcal{O}_K$ be its number ring and $M\subset\mathcal{O}_K$ be an ideal. A Hecke character modulo $M$ is a function ${}_{h}\chi$ from the set of ideals of $\mathcal{O}_K$ to $\mathbb{C}$ satisfying following conditions
\begin{enumerate}
\item ${}_{h}\chi(\mathcal{O}_K)=1$.
\item ${}_{h}\chi(A)\neq 0$ if, and only if, $A$ is relatively prime to $M$.
\item ${}_{h}\chi(AB)={}_{h}\chi(A){}_{h}\chi(B)$.
\item There is an element $\theta=\sum n(\sigma)\sigma\in\mathbb{Z}[Gal(K/\mathbb{Q})]$ such that for $\alpha\in\mathcal{O}_K$, $\alpha\equiv 1\mod M$, then ${}_{h}\chi((\alpha))=\alpha^{\theta}$.
\item There is an integer $m>0$ called weight, such that for any $\sigma\in Gal(K/\mathbb{Q})$, $n(\sigma)+n(j\sigma)=m.$ Where $j$ denotes the complex conjugation automorphism restricted to $K$.
\end{enumerate}
The largest ideal $M$ such that Hecke character $\chi$ is a character modulo $M$ is called the conductor of the character.
\begin{prop}\cite[Chapter~18]{Ireland-Rosen} \label{weight of Hecke character}
    Let ${}_{h}\chi$  be a Hecke character of weight $m$ and modulus $M$. Then if $(A,M)=1$, $|{}_{h}\chi(A)|=N(A)^{m/2}.$
\end{prop}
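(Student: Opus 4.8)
The plan is to reduce the claim to the case of a principal ideal generated by an element that is congruent to $1$ modulo $M$, where property (4) gives an explicit formula for ${}_{h}\chi$, and then to evaluate the resulting absolute value using the CM structure of $K$ together with property (5).

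First I would exploit the finiteness of the ideal class group of $K$: if $h$ is the class number, then $A^{h}=(\alpha)$ for some $\alpha\in\mathcal{O}_K$, and since $(A,M)=1$ the element $\alpha$ represents a unit in the finite ring $\mathcal{O}_K/M$. Letting $d=|(\mathcal{O}_K/M)^{\times}|$ and setting $\beta:=\alpha^{d}$, we get $\beta\equiv 1\pmod M$ and $(\beta)=A^{hd}$. By property (2) and multiplicativity (property (3)) we have ${}_{h}\chi(A)\neq 0$ and ${}_{h}\chi(A)^{hd}={}_{h}\chi(A^{hd})={}_{h}\chi((\beta))=\beta^{\theta}=\prod_{\sigma}\sigma(\beta)^{n(\sigma)}$, where the product runs over the embeddings $\sigma\colon K\hookrightarrow\mathbb{C}$ (identified with $Gal(K/\mathbb{Q})$ after fixing one embedding, with $n$ transported accordingly).

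The heart of the proof is the absolute value computation. Since $K$ is a CM field, complex conjugation acts on every embedded copy $\sigma(K)$ as the fixed automorphism $j\in Gal(K/\mathbb{Q})$, so $\overline{\sigma(\beta)}=\sigma(j\beta)$ and in particular $|\sigma(\beta)|=|\sigma(j\beta)|$ for every $\sigma$. Hence
$$\Bigl|\prod_{\sigma}\sigma(\beta)^{n(\sigma)}\Bigr|^{2}=\prod_{\sigma}\bigl(\sigma(\beta)\,\overline{\sigma(\beta)}\bigr)^{n(\sigma)}=\Bigl(\prod_{\sigma}\sigma(\beta)^{n(\sigma)}\Bigr)\Bigl(\prod_{\sigma}\sigma(j\beta)^{n(\sigma)}\Bigr).$$
Reindexing the second product by $\sigma\mapsto\sigma j$ (using $j^{2}=1$ and the centrality of $j$) and then invoking property (5) in the form $n(\sigma)+n(j\sigma)=m$, the right-hand side collapses to $\bigl(\prod_{\sigma}\sigma(\beta)\bigr)^{m}=N_{K/\mathbb{Q}}(\beta)^{m}$. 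Because $K$ is CM with totally real subfield $K^{+}$, one has $N_{K/\mathbb{Q}}(\beta)=N_{K^{+}/\mathbb{Q}}(\beta\cdot j\beta)$, a norm of a totally positive element, hence a positive rational integer equal to the absolute ideal norm $N((\beta))=N(A^{hd})=N(A)^{hd}$. Combining, $|{}_{h}\chi(A)|^{2hd}=N(A)^{hdm}$, and taking the positive $2hd$-th root gives $|{}_{h}\chi(A)|=N(A)^{m/2}$.

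I expect the only delicate points to be the reindexing step and the identity $|\sigma(\beta)|=|\sigma(j\beta)|$: this is precisely where being a CM field is used rather than an arbitrary number field, since it guarantees that complex conjugation is a single central automorphism $j$ independent of the embedding, so that the exponents pair up exactly as $n(\sigma)+n(j\sigma)$ and property (5) can be applied uniformly across all terms; the positivity of $N_{K/\mathbb{Q}}(\beta)$, also a consequence of the CM structure, is what lets us pass cleanly between complex absolute values and the ideal norm.
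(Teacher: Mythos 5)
Your argument is correct. The paper does not prove this proposition at all --- it is quoted from Ireland--Rosen --- and your proof is essentially the standard one from that setting: reduce to a principal ideal with generator $\equiv 1 \bmod M$ (via the class number together with the order of $(\mathcal{O}_K/M)^{\times}$, i.e.\ finiteness of the ray class group), apply property (4), and use centrality of $j$ in the CM Galois field plus property (5) to collapse $\theta+j\theta$ to $m\sum_\sigma \sigma$, giving $|\beta^{\theta}|^2=N_{K/\mathbb{Q}}(\beta)^m=N((\beta))^m$; the final positive-root extraction is valid since all quantities are positive reals.
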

The $L$-function attached to a Hecke character $\chi_M$ of $K$ is defined as
$$L({}_{h}\chi,s):=\prod_{\mathfrak{p}}(1-{}_{h}\chi(\mathfrak{p})
N(\mathfrak{p})^{-s})^{-1}=
\sum_A\frac{{}_{h}\chi(A)}{N(A)^s}.$$ Where the product is over all the prime ideals in $\mathcal{O}_K$ and the sum is over all the ideals in $\mathcal{O}_K.$

\begin{theorem}\cite{SilvermanAdvancedTI}
    Let $\chi_M$ be a Hecke character of weight $m$, then

\begin{itemize}
    \item $L({}_{h}\chi,s)$ converges absolutely for $Re(s)>1+m/2$.
     \item $L({}_{h}\chi,s)$ has a meromorphic continuation to the whole complex plane, being analytic except for a pole of order 1 at $s=1$ when the character is trivial.
     \item If ${}_{h}\chi$ is a primitive character, $L({}_{h}\chi,s)$ satisfies a function equation relating $L({}_{h}\chi,s)$ to $L(\overline{{}_{h}\chi},1+m-s)$.
\end{itemize}
\end{theorem}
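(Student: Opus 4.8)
The plan is to treat the three assertions separately: absolute convergence follows from a direct majorization, while the meromorphic continuation and the functional equation are obtained together from the transformation law of an associated theta series under Poisson summation (Hecke's method), specialized here to the CM field of interest, $K=\mathbb{Q}(\sqrt{-7})$, which has a single complex place.

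For the region of absolute convergence I would use Proposition \ref{weight of Hecke character} directly. Since ${}_h\chi(A)=0$ unless $(A,M)=1$, in which case $|{}_h\chi(A)|=N(A)^{m/2}$, the defining Dirichlet series is dominated termwise by a Dedekind zeta value:
\[
\sum_A \left| \frac{{}_h\chi(A)}{N(A)^{s}} \right| = \sum_{(A,M)=1} N(A)^{m/2-\mathrm{Re}(s)} \le \zeta_K\!\left(\mathrm{Re}(s)-\tfrac{m}{2}\right),
\]
where $\zeta_K(w)=\sum_A N(A)^{-w}$. As the number of integral ideals of $\mathcal{O}_K$ of norm at most $X$ is $O(X)$ (elementary lattice-point counting), $\zeta_K(w)$ converges for $\mathrm{Re}(w)>1$; hence the series for $L({}_h\chi,s)$ converges absolutely exactly when $\mathrm{Re}(s)>1+m/2$, which is the first claim and also pins $s=1+m/2$ as the edge of convergence.

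For the remaining two claims I would follow the theta-function argument. Grouping integral ideals by ideal class and fixing a representative ideal $\mathfrak{b}$ in each class, every ideal in the class is $(\alpha)\mathfrak{b}^{-1}$ with $\alpha\in\mathfrak{b}$ taken modulo the (finite, since $K$ is CM) unit group; condition (4) of the definition then turns the character into the archimedean weight ${}_h\chi((\alpha))=\prod_{\sigma}(\sigma\alpha)^{n(\sigma)}$ times its finite part. Each class thus contributes a lattice sum, and I would realize its completed form---$L({}_h\chi,s)$ times an archimedean $\Gamma$-factor built from $\Gamma_{\mathbb{C}}(s)=2(2\pi)^{-s}\Gamma(s)$ and an explicit discriminant/conductor power, call it $\Lambda({}_h\chi,s)$---as a Mellin transform $\int_0^\infty \Theta(t)\,t^{s}\,\tfrac{dt}{t}$ of the theta series $\Theta(t)=\sum_{\alpha\in\mathfrak{b}}\prod_\sigma(\sigma\alpha)^{n(\sigma)}e^{-\pi t|\sigma\alpha|^2}$. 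The heart of the matter is the transformation law
\[
\Theta(1/t)=c\,t^{\,1+m}\,\Theta^{*}(t),
\]
which I would derive from Poisson summation over $\mathfrak{b}$: the Gaussian weighted by $\prod_\sigma(\sigma\alpha)^{n(\sigma)}$ is, up to an explicit constant, its own Fourier transform, so the sum over $\mathfrak{b}$ is exchanged for one over the dual lattice $\mathfrak{b}^{*}=(\mathfrak{b}\mathfrak{d})^{-1}$, and the two-dimensional scaling produces the exponent $1+m$. As in the general Fourier-duality mechanism (Tate's functional equation, where duality replaces a character by its contragredient), the Poisson-transformed sum reassembles, up to a root-number (Gauss-sum) constant $W$ of modulus one, into the $L$-function of the complex-conjugate character $\overline{{}_h\chi}$. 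Splitting the Mellin integral at $t=1$ and feeding this transformation into the piece over $(0,1)$ represents $\Lambda({}_h\chi,s)$ as an everywhere-convergent integral plus finitely many explicit elementary terms; this yields the meromorphic continuation to all of $\mathbb{C}$ and simultaneously the functional equation
\[
\Lambda({}_h\chi,s)=W\cdot\Lambda\!\left(\overline{{}_h\chi},\,1+m-s\right).
\]
The shift $s\mapsto 1+m-s$ is transparent after normalizing $\chi'=\chi\cdot N^{-m/2}$ to a unitary character with the standard symmetry $s'\mapsto 1-s'$ and substituting $s'=s-m/2$. Finally, the only poles arise from the elementary boundary terms, whose residues record the $\alpha=0$ frequency surviving Poisson summation; this occurs precisely when the infinity type and the finite part are both trivial, i.e.\ when $\chi$ is the trivial character, in which case $L({}_h\chi,s)=\zeta_K(s)$ contributes its single simple pole at $s=1$, exactly as asserted.

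The step I expect to be the main obstacle is the Poisson-summation bookkeeping: verifying the Fourier self-reciprocity of the archimedean weight at the complex place, identifying the dual lattice through the different and the trace pairing, and---most delicately---checking that duality correctly produces $\overline{{}_h\chi}$ together with the exact root number $W$ and discriminant constants, so that the functional-equation factor comes out as stated. Re-assembling the class-by-class theta contributions into the single completed $L$-function while keeping the finite part of the character coherent is where essentially all of the technical effort lies; convergence and the location of the pole then follow comparatively easily.
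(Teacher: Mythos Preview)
The paper does not prove this theorem at all: it is stated with the citation \cite{SilvermanAdvancedTI} and used as a black box, so there is no in-paper argument to compare your proposal against. Your sketch is the standard Hecke theta-series approach and is a reasonable outline of how one would actually establish the result; the only quibble is that you specialize to $K=\mathbb{Q}(\sqrt{-7})$, whereas the statement is for a general CM field, so the bookkeeping (one complex place versus several, and a possibly nontrivial class group) would need to be carried out in that generality. But since the paper itself offers no proof, your write-up goes well beyond what the paper does here.
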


Due to Deuring's work, we know that  $L$-functions of a CM elliptic curve are equal to the $L$-function of a Hecke character over a number field. To demonstrate this, let $K=\mathbb{Q}(\sqrt{-7})$, and define the map from the prime ideals of $\mathcal{O}_K$ to $\mathbb{C}$  as:
\begin{itemize}
\item ${}_{h}\chi((\sqrt{-7}))=0$.
    \item ${}_{h}\chi(P)=-p$, for $P=(p)$, where $p$ is  an inert prime.
    \item For primes $p$ splitting in $\mathcal{O}_K$, that is $p\equiv 1,2$ or $4\mod 7$, let $p=\left(\frac{a+b\sqrt{-7}}{2}\right)\left(\frac{a-b\sqrt{-7}}{2}\right)$, then for prime ideal $P=(\frac{a+b\sqrt{-7}}{2})$ let,
    $${}_{h}\chi(P)=\frac{a+b\sqrt{-7}}{2}, \text{ with }\;\, {}_{h}\chi(P)p\equiv 1\mod\sqrt{-7}.$$

    \end{itemize}

This map is a Hecke character in $\mathbb{Q}(\sqrt{-7})$, with conductor $(\sqrt{-7})$.\\~\\
Later, we realize that the $L$-function of this Hecke character is the $L$-function of the newform with integer Fourier coefficients, which appears as a factor of the $L$-function of the Klein Quartic curve.\\~\\

\subsection{Galois representations}\label{Grep} 
  
We follow \cite{Diamond-Shurman} for this section. Some background from class field theory is helpful, a good reference would be \cite{Neukirch} or \cite{Lang}. Let $\mathbb{K}$ be a Galois number field, with $\mathcal{O}_{\mathbb{K}}$ its ring of integers. Then for a prime $p\in \mathbb{Z}$, there exists positive integers $e$, $f$ and $g$ describing $p\mathcal{O}_{\mathbb{K}}$ in terms of product of distinct maximal ideals $\mathfrak{p}_i$'s in $\mathcal{O}_{\mathbb{K}}$ as 
$$ p\mathcal{O}_\mathbb{K}=(\mathfrak{p}_1\mathfrak{p}_2...\mathfrak{p}_g)^e,$$ and  
$$\mathcal{O}_\mathbb{K}/\mathfrak{p}_i\cong\mathbb{F}_{p^f} \; \text{for all} \;i,$$ with product $efg=[\mathbb{K}:\mathbb{Q}].$ Define the decomposition group for each maximal ideal $\mathfrak{p}$ in $\mathcal{O}_\mathbb{K}$ lying over $p\in \mathbb{Z}$ as  
$$D_\mathfrak{p}:=\{ \sigma\in Gal(\mathbb{K}/\mathbb{Q}) \,\mid\, \mathfrak{p}^{\sigma}=\mathfrak{p}  \}.$$ 
Taking the kernel of the natural action of the decomposition group on the residue field $\mathbf{f}_\mathfrak{p}=\mathcal{O}_{\mathbb{K}}/\mathfrak{p}$, gives the inertia group of $\mathfrak{p}$ 
$$I_\mathfrak{p}:=\{  \sigma\in D_{\mathfrak{p}}\  \,\mid\, x^{\sigma}\equiv x \mod \mathfrak{p}, \,\forall\, x \in \mathcal{O}_{\mathbb{K}}    \}.$$ 
It turns out that the quotient $D_\mathfrak{p}/I_\mathfrak{p}$ is isomorphic to $Gal(\mathbf{f}_{\mathfrak{p}}/\mathbb{F}_p )= \langle \sigma_{p} \rangle$, where $\sigma_p$ is the $p$-th power map. 
A representative of the generator of the quotient group $D_\mathfrak{p}/I_\mathfrak{p}$ is called a Frobenius element for the maximal ideal $\mathfrak{p}$, denoted as $\text{Frob}_{\mathfrak{p}}$. \\

Now let $G_{\mathbb{Q}}$ be the absolute Galois group of $\mathbb{Q}$. The decomposition and inertia group of a maximal ideal $\mathfrak{p}$ in $\overline{\mathbb{Z}}$ over a prime $p$ is defined similarly as in the case of a number field. With 
$$D_{\mathfrak{p}}=\{ \sigma\in G_\mathbb{Q}  \,\mid\, \mathfrak{p}^{\sigma}=\mathfrak{p}  \}$$ and, 
$$I_{\mathfrak{p}}=\{ \sigma\in D_{\mathfrak{p}} \,\mid\, x^{\sigma}\equiv x \mod \mathfrak{p}, \,\forall\, x \in \overline{\mathbb{Z}} \}.$$ 
Where $I_{\mathfrak{p}}$ is again the kernel of the action of $D_{\mathfrak{p}}$ on the residue field $\overline{\mathbb{Z}}/\mathfrak{p}$. The homomorphism obtained by this action $$D_{\mathfrak{p}}\to G_{\mathbb{F}_{p}},$$ 
is surjective, and any representative of the preimage of Frobenius automorphism $\sigma_p\in G_{\mathbb{F}_p}$, denoted as $\text{Frob}_{\mathfrak{p}}$, is called an absolute Frobenius element.  
Absolute Frobenius element(s) when restricted to a number field $\mathbb{K}$ correspond(s) to a Frobenius element of the number field, i.e.,
$$\text{Frob}_{\mathfrak{p}}|_{\mathbb{K}}=\text{Frob}_{\mathfrak{p}_{\mathbb{K}}},\,\,\,\, \text{with}\,\, \mathfrak{p}_\mathbb{K}=\mathfrak{p}\cap \mathbb{K}.$$ 
All maximal ideals over a prime $p$ are conjugate to each other, and Frobenius of a conjugate ideal is a conjugate of Frobenius element, i.e.

$$\text{Frob}_{\mathfrak{p}^{\sigma}}=\sigma^{-1}\text{Frob}_{\mathfrak{p}}\sigma.$$ 
\begin{theorem}\label{density} \cite{Diamond-Shurman}
The set of $\text{Frob}_{\mathfrak{p}}$ for all but finitely many maximal ideals $\mathfrak{p}$ of $\overline{\mathbb{Z}}$, form a dense subset of the absolute Galois group $G_{\mathbb{Q}}$. 
  
\end{theorem}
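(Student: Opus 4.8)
The plan is to reduce the statement to the Chebotarev density theorem for finite Galois extensions and then transport the conclusion through the profinite structure of $G_{\mathbb{Q}}$. First I would recall that $G_{\mathbb{Q}}=\varprojlim_{K}\mathrm{Gal}(K/\mathbb{Q})$, the inverse limit over finite Galois subextensions $K/\mathbb{Q}$, so that the open subgroups $\mathrm{Gal}(\overline{\mathbb{Q}}/K)$ form a neighborhood basis at the identity and the cosets $\sigma\,\mathrm{Gal}(\overline{\mathbb{Q}}/K)$ a basis of open sets. Hence, to prove that a subset $S\subseteq G_{\mathbb{Q}}$ is dense, it suffices to show that for every finite Galois $K/\mathbb{Q}$ and every target $\sigma\in G_{\mathbb{Q}}$ there is an element of $S$ agreeing with $\sigma$ on $K$, i.e. lying in $\sigma\,\mathrm{Gal}(\overline{\mathbb{Q}}/K)$.

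Next I would let $S$ be the collection of all (choices of) $\text{Frob}_{\mathfrak{p}}$ for $\mathfrak{p}$ outside the excluded finite set $E$, and fix such a $K$ and $\sigma$. Put $\bar{\sigma}=\sigma|_{K}\in\mathrm{Gal}(K/\mathbb{Q})$, and let $P$ be the finite set of rational primes that are either ramified in $K$ or lie below some ideal of $E$. Applying the Chebotarev density theorem to $K/\mathbb{Q}$ yields a set of positive density — in particular infinitely many — of primes $p\notin P$ whose Frobenius conjugacy class in $\mathrm{Gal}(K/\mathbb{Q})$ is exactly the conjugacy class of $\bar{\sigma}$; choose one such $p$. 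Since the Frobenius conjugacy class of $p$ is precisely the set of elements $\text{Frob}_{\mathfrak{q}}$ as $\mathfrak{q}$ ranges over the primes of $\mathcal{O}_{K}$ above $p$, and $p$ is unramified in $K$, one of these satisfies $\text{Frob}_{\mathfrak{q}}=\bar{\sigma}$ exactly.

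Finally I would lift: by the lying-over theorem there is a maximal ideal $\mathfrak{p}$ of $\overline{\mathbb{Z}}$ with $\mathfrak{p}\cap\mathcal{O}_{K}=\mathfrak{q}$ (it is maximal because $\overline{\mathbb{Z}}/\mathfrak{p}$ is algebraic over the field $\mathcal{O}_{K}/\mathfrak{q}$), and since $p\notin P$ we have $\mathfrak{p}\notin E$, so $\text{Frob}_{\mathfrak{p}}\in S$. The compatibility $\text{Frob}_{\mathfrak{p}}|_{K}=\text{Frob}_{\mathfrak{p}\cap K}=\text{Frob}_{\mathfrak{q}}=\bar{\sigma}=\sigma|_{K}$ recorded in the preceding subsection shows $\sigma^{-1}\text{Frob}_{\mathfrak{p}}\in\mathrm{Gal}(\overline{\mathbb{Q}}/K)$, i.e. $\text{Frob}_{\mathfrak{p}}$ lies in the chosen basic open coset; hence $S$ meets every basic open set and is dense. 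The one genuine input is the Chebotarev density theorem; everything else is bookkeeping with the profinite topology and the Frobenius restriction/conjugacy relations already stated. The only mild subtlety is that the absolute Frobenius element $\text{Frob}_{\mathfrak{p}}$ is well-defined only modulo the inertia group $I_{\mathfrak{p}}$ (which is nontrivial, as $\overline{\mathbb{Q}}/\mathbb{Q}$ is ramified everywhere), but this is harmless: any representative restricts to the same element $\text{Frob}_{\mathfrak{q}}$ on $K$, which is all the argument uses.
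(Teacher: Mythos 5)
Your proof is correct. The paper states this result only as a citation to Diamond--Shurman and gives no argument of its own; your reduction to the Chebotarev density theorem --- checking density on the basic open cosets $\sigma\,\mathrm{Gal}(\overline{\mathbb{Q}}/K)$ of the Krull topology, realizing $\sigma|_K$ as $\text{Frob}_{\mathfrak{q}}$ for some unramified $\mathfrak{q}$ of $\mathcal{O}_K$ outside the excluded primes, lifting $\mathfrak{q}$ to a maximal ideal of $\overline{\mathbb{Z}}$, and using $\text{Frob}_{\mathfrak{p}}|_{K}=\text{Frob}_{\mathfrak{p}\cap K}$ (with the inertia ambiguity harmless since $\mathfrak{q}$ is unramified in $K$) --- is precisely the standard argument behind that citation.
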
 
  
Let $\rho$ be a finite-dimensional, continuous, $\ell$-adic Galois representation, then we call this representation \textit{unramified} at the ideal $\mathfrak{p}$, if $I_{\mathfrak{p}}\subset Ker(\rho)$. For $\rho$ unramified at $\mathfrak{p}$, value of $\rho({\text{Frob}_{\mathfrak{p}}})$ is well defined. From the Theorem \ref{density} we see that  $\rho$ is determined by the values of $\rho(\text{Frob}_{\mathfrak{p}})$ at the maximal ideals $\mathfrak{p}$, where $\rho$ is unramified.

\section{Klein Quartic curve} 
Klein Quartic curve, defined by the following equation
\begin{equation}\label{KQ curve} 
    x^3y+y^3z+z^3x=0, 
\end{equation} 
is a compact Riemann surface of genus three. It has the highest possible order of the automorphism group (orientation preserving) for its genus, given by Hurwitz's automorphism theorem \cite{Hurwitz}, with an automorphism group of order $84(g-1)=168$.
To mention some of the automorphisms, notice that any cyclic interchange of coordinates of a point still lands on the curve; therefore, the cyclic group $C_3$ is contained in the automorphism group of the Klein Quartic curve. 
Further, notice that $\tau_{(2,1,4)}((x,\, y,\, z))=(\zeta_7^{2k} x,\,\zeta_7^{k} y, \zeta_7^{4k} z)$ is also an automorphism for $1\leq k\leq 7$. This automorphism turns out to play a key role in determining the Zeta function of the curve. The automorphism group of the Klein Quartic curve is isomorphic to $\text{PSL}_2(\mathbb{F}_7)$, which is the second smallest non-abelian simple group. For a more detailed discussion of the automorphism group, the reader can refer to the essay on the Klein Quartic curve by Noam D. Elkies \cite{Elkies}.

\subsection[Bookmark-safe title]{Number of points modulo a prime $p$ on Klein Quartic curve over finite fields of characteristic $p$.} \label{section point counting modulo p}

The Hasse-Weil theorem for smooth projective curve $C$ of genus $g$, defined over a finite field $\mathbb{F}_q$, gives the following bound for the number of points on the curve over $\mathbb{F}_q$, $$|N_q(C)-(q+1)|\leq 2g\sqrt{q}.$$
Therefore, for large $q$,  counting the number of points up to a multiple of $p$ on the curve over the finite field $\mathbb{F}_q$ of characteristic $p$ can give us information about the exact number of points over $\mathbb{F}_q$.

\begin{lemma}Let $f(x,y,z)=x^3y+y^3z+z^3x$ defined over a finite field $\mathbb{F}_q$ of characteristic $p\neq 7$. Then
\begin{equation}\label{point counting modulo q} 
N_q(f)\equiv 
\begin{cases} 
1-3\displaystyle \binom{q-1}{\frac{q-1}{7},\frac{2(q-1)}{7},\frac{(4q-1)}{7}} &\mod p \hspace{1cm} \text{ if } 7\mid (q-1);\\ 
1 &\mod p \hspace{1cm}\text{ otherwise}. 
\end{cases} 
\end{equation} 
\end{lemma}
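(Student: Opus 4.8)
The plan is to compute $N_q(f)$ modulo $p$ by the classical power-sum method (a mod-$p$ Chevalley--Warning computation): pass to the affine cone, rewrite the vanishing locus using $1-f^{q-1}$, and identify the monomials of $f^{q-1}$ that survive summation over $\mathbb{F}_q$. Write $q=p^r$ throughout, so that $q\equiv 0$ and $q-1\equiv -1\pmod p$. Since $f$ is homogeneous of degree $4$, each point of $\{f=0\}\subset\mathbb{P}^2(\mathbb{F}_q)$ is covered by exactly $q-1$ nonzero points of the cone $\widetilde C=\{(x,y,z)\in\mathbb{F}_q^3:\ f(x,y,z)=0\}$, with $(0,0,0)$ the only additional point, so $|\widetilde C|=1+(q-1)N_q(f)$ and hence $N_q(f)\equiv 1-|\widetilde C|\pmod p$. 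Using $\mathbf{1}[a=0]=1-a^{q-1}$ for all $a\in\mathbb{F}_q$ (legitimate since $0^{q-1}=0$) gives $|\widetilde C|=q^3-\sum_{x,y,z\in\mathbb{F}_q}f(x,y,z)^{q-1}\equiv -\sum_{x,y,z\in\mathbb{F}_q}f(x,y,z)^{q-1}\pmod p$, so the lemma reduces to the identity
\begin{equation*}
N_q(f)\ \equiv\ 1+\sum_{x,y,z\in\mathbb{F}_q}\bigl(x^3y+y^3z+z^3x\bigr)^{q-1}\pmod{p}.
\end{equation*}

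Next I would expand by the multinomial theorem, $(x^3y+y^3z+z^3x)^{q-1}=\sum_{i+j+k=q-1}\binom{q-1}{i,j,k}\,x^{3i+k}y^{i+3j}z^{j+3k}$, and sum each monomial over $\mathbb{F}_q^3$ via the elementary fact that $\sum_{t\in\mathbb{F}_q}t^m$ equals $-1$ when $m$ is a positive multiple of $q-1$ and is $\equiv 0\pmod p$ otherwise (the case $m=0$ giving $q$). Thus a monomial $x^ay^bz^c$ contributes $(-1)^3=-1$ times its multinomial coefficient, modulo $p$, exactly when each of $a,b,c$ is a positive multiple of $q-1$, and contributes $0$ in all other cases. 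Since $a+b+c=(3i+k)+(i+3j)+(j+3k)=4(q-1)$ for every term, the only admissible exponent multiset is $\{q-1,\,q-1,\,2(q-1)\}$, and its mere existence forces $7\mid (q-1)$.

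Finally, setting $(3i+k,\,i+3j,\,j+3k)$ equal to each of the three orderings of $(2(q-1),\,q-1,\,q-1)$, together with $i+j+k=q-1$, yields in each case a unique solution; these turn out to be the three cyclic permutations of $\bigl(\tfrac{q-1}{7},\,\tfrac{2(q-1)}{7},\,\tfrac{4(q-1)}{7}\bigr)$, which are nonnegative integers precisely when $7\mid (q-1)$. Hence if $7\nmid (q-1)$ no term survives and $N_q(f)\equiv 1\pmod p$, while if $7\mid (q-1)$ the three surviving monomials all carry the coefficient $\binom{q-1}{(q-1)/7,\ 2(q-1)/7,\ 4(q-1)/7}$ by symmetry of the multinomial coefficient, so $\sum_{x,y,z}f^{q-1}\equiv -3\binom{q-1}{(q-1)/7,\ 2(q-1)/7,\ 4(q-1)/7}\pmod p$; this is exactly \eqref{point counting modulo q}.

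The argument involves no deep analytic or geometric input, so the main obstacle is simply making the combinatorial extraction airtight: confirming that $\{q-1,q-1,2(q-1)\}$ is the \emph{only} exponent multiset compatible with $a+b+c=4(q-1)$ and positivity, that it forces $7\mid(q-1)$, and that each of the three associated linear systems has the claimed unique, integral, nonnegative solution. One must also keep the harmless reductions straight — $q^3\equiv q\equiv 0$, $q-1\equiv -1$, and the automatic vanishing of any monomial having a zero exponent (because $\sum_{t\in\mathbb{F}_q}t^0=q\equiv 0$), so that no separate positivity bookkeeping is needed beyond the divisibility condition.
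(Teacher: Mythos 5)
Your proof is correct, and the engine is the same as the paper's: detect $f=0$ via the indicator $1-f^{q-1}$, expand $f^{q-1}$ by the multinomial theorem, and keep only monomials all of whose exponents are positive multiples of $q-1$, which forces the exponent multiset $\{q-1,q-1,2(q-1)\}$ and leads to the three cyclic solutions $(i,j,k)=\bigl(\tfrac{q-1}{7},\tfrac{2(q-1)}{7},\tfrac{4(q-1)}{7}\bigr)$ with common coefficient $\binom{q-1}{(q-1)/7,\,2(q-1)/7,\,4(q-1)/7}$. The one genuine difference is the reduction to that exponential sum: the paper works in the affine chart $z=1$ and adds the two points at infinity, which costs it some extra bookkeeping (the terms $x^{3q-3}y^{q-1}$, $y^{3q-3}$, $x^{q-1}$ must be examined separately, the first contributing $(q-1)^2\equiv 1$, and this $1$ is then offset against the $+2$ at infinity), whereas you count on the homogeneous cone, using $|\widetilde C|=1+(q-1)N_q(f)$ and hence $N_q(f)\equiv 1-|\widetilde C|\pmod p$; this keeps the three variables symmetric, so any monomial with a zero exponent dies automatically ($\sum_t t^0=q\equiv 0$) and no boundary terms appear. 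Your route is slightly cleaner for exactly this reason; the paper's chart computation is more direct but needs the case analysis of the ``first set'' of terms. Two cosmetic points: the equality $|\widetilde C|=q^3-\sum f^{q-1}$ should be read as a congruence mod $p$ (the right side lives in $\mathbb{F}_p$), the same harmless abuse the paper makes; and ``its mere existence forces $7\mid(q-1)$'' is imprecise on its own --- it is the integrality of the unique rational solution $(i,j,k)$ of each linear system that forces $7\mid(q-1)$, which you do state correctly in the next sentence. Note also that your exponents $\tfrac{4(q-1)}{7}$ match the paper's proof; the $\tfrac{4q-1}{7}$ in the lemma statement is a typo there.
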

\begin{proof}
A point $(x,y,z)$ over $\mathbb{F}_q$ is not a solution of $f(x,y,z)=0$ if, and only if, $f(x,y,z)^{q-1}= 1$ in $\mathbb{F}_q$. The number of solutions of $f$ in degree-2 projective space over finite field $\mathbb{F}_q$ is given by 
\begin{equation} 
    N_q(f)= \left[q^2-\sum_{\substack {(x,y)\in \mathbb{F}_q^2}}f(x,y,1)^{q-1}\right]+2.
\end{equation} 
The expression in the square brackets is the number of points with $z=1$; and there are two points at infinity on the curve, namely $(1,0,0)$ and $(0,1,0)$.\\ 
Rewriting the above expression, by expanding $f(x,y,z)^{q-1}$, 
\begin{equation} 
    N_q(f)\equiv -\sum_ {(x,y)\in \mathbb{F}_q\times \mathbb{F}_q}\sum_{i,j}a_{ij}x^iy^j +2  \;\; \mod p. 
\end{equation} 
The only terms that remain in the sum will be the ones with $(q-1)\mid i$ and $(q-1)\mid j$. Otherwise, it adds up to zero using the fact that for $(q-1) \nmid i$, the sum $\sum_{x\in\mathbb{F}_q}x^i=0$. \\
\\
The possible terms in the sum of the mentioned form are $\{x^{3q-3}y^{q-1},\,y^{3q-3},\,x^{q-1}\}$ and $ \{x^{q-1}y^{2q-2},\,x^{2q-2}y^{q-1},\,x^{q-1}y^{q-1}\}$. In the first set, the term $x^{3q-3}y^{q-1}$ adds up to $(q-1)^2\equiv 1\mod p$, while others add up to $q(q-1)\equiv0\mod p$. Whereas the terms in the second set occur if, and only if, $7$ divides $q-1$, giving an overall contribution of $$3\binom{q-1}{\frac{q-1}{7},\frac{2(q-1)}{7},\frac{4(q-1)}{7}}.$$ 
Combining the contribution from both sets gives the expression for $N_q(f)$ \eqref{point counting modulo q}.    
\end{proof}

A quick observation that we can make from this lemma is that for $\alpha_i$ being the inverse roots of polynomial $P_{q,1}$, and $q\not\equiv 1\mod 7$, we have $\sum_{i=1}^6\alpha_i\equiv0\mod p$. In fact, in section \ref{Zeta function of the Klein Quartic curve} we see that we can give a morphism from $FC_7$ to $KQ$ over any field, in particular, it is a bijection if the field does not contain a $7$-th root of unity, which is the case for $\mathbb{F}_q$ with $7\nmid (q-1)$. We have $N_q(FC_7)=q+1$, and hence $N_q(KQ)=q+1$. This agrees with the lemma, furthermore, this gives $\sum_{i=1}^6\alpha_i=0$.

\subsection{Zeta function of the Klein Quartic curve}\label{Zeta function of the Klein Quartic curve}
The Klein Quartic curve is a smooth projective curve of genus three over the complex numbers; therefore, for $7\nmid q$, by Weil's conjectures, we see that its Zeta function is of the form 
\begin{equation}\label{ZetaKQ} 
    Z_q(KQ;T)=\frac{\prod_{i=1}^6(1-\alpha_iT)}{(1-T)(1-qT)}, 
\end{equation} 
with $|\alpha_i|=q^{1/2}$, and $q$ a prime power. 
Also, observe the map $$\phi:\, X^7+Y^7+Z^7=0\to x^3y+y^3z+z^3x=0,$$ given by $$\phi(X,\, Y,\, Z)=(X^3Z,\, Y^3X, \, Z^3Y),$$ is a non-constant morphism of curves, hence it is surjective. Therefore, every point in the Klein Quartic is of the form $(a^3c,\,b^3a,\,c^3b)$ for some $(a,\,b,\,c)$ in the degree $7$ Fermat curve. We observe that for any fixed choice of $7$-th root of unity $\zeta_7\in \mathbb{C}$, there are at least $7$ preimages for the point $(a^3c,\,b^3a,\,c^3b)$, namely the set,
$\{(\zeta_7^ia,\,\zeta_7^{2i}b,\,\zeta_7^{4i}c)\mid 1\leq i\leq 7\}$.
Therefore, the degree of the map has to be at least $7$; and from the Riemann-Hurwitz theorem \cite{Silverman}, we see that the maximum possible degree is $7$, for which it must be unramified. In conclusion, Fermat curve $FC_7$ is a degree $7$ unramified cover of the Klein Quartic curve. 
  
\paragraph{ }Putting things together, we now know that Klein Quartic's $L$-function must be a factor of the $L$-function of the degree $7$ Fermat curve.  In other words, the $6$ factors in the numerator of the local Zeta function of the Klein Quartic curve come from the $30$ factors in the numerator of the local Zeta function of the degree $7$ Fermat  Curve. Where the numerator for degree $7$ Fermat curve, given by \eqref{zeta of n-fermat curve}, is $P_{q,1}(T)=\prod_{\substack{i,j=1\\ i+j\neq7}}^6(1-J_q(\chi_7^i,\chi_7^j)T)$. All that remains to see is which $6$ out of these $30$ factors are present in the local Zeta function of the Klein Quartic curve.  
  
\paragraph{} For $q\equiv1\mod 7$, we have $J_q(\chi_7^i,\chi_7^j)\in \mathbb{Z}[\zeta_7]$, and the Galois group $G_{\mathbb{Q}}$ has an action on the set of Jacobi sums $\{J_q(\chi_7^i,\, \chi_7^j) \mid 1\leq i,\,j\leq 6;\, i+j\neq 7\}$, via $\sigma_k(\zeta_7)=\zeta_7^k$. This action divides the set into $5$ orbits of $6$ elements each. One of these orbits corresponds to the local Zeta function of the Klein Quartic curve.\\ 
We can give the action of $G_{\mathbb{Q}}$  on the points of the Fermat curve as $$\tau_{(i,\,j,\,k)}( X,\,Y,\, Z)= (\zeta_7^i X,\,\zeta_7^{j} Y, \zeta_7^{k}Z), $$ with $i,j,k$ varying from $1$ to $6$. 
Whereas, for $\tau_{(i,j,k)}$ to be an action on the Klein Quartic curve we want $(\zeta_7^i x,\,\zeta_7^{j} y, \zeta_7^{k}z)$ to be a point on the curve \eqref{KQ curve}. As stated at the beginning of this section only $\tau_{(2k,k,4k)}$, for $1\leq k\leq 6$, induces automorphisms on the curve. 

\paragraph{}Before deciding on the orbit we record that for $7\mid (i+j+k)$ and $7\nmid i,j,k$  we have $J_q(\chi_7^i,\chi_7^j)=J_q(\chi_7^i,\chi_7^k)=J_q(\chi_7^j,\chi_7^k)$. To see this observe
\begin{align} 
    J_q(\chi_7^i,\chi_7^j)&=\frac{g_q(\chi_7^i)g_q(\chi_7^j)}{g_q(\chi_7^{i+j})}\\ 
    &=\frac{g_q(\chi_7^i)g_q(\chi_7^j)g_q(\chi_7^k)}{g_q(\chi_7^{i+j})g_q(\chi_7^k)}\\ 
    &=\frac{g_q(\chi_7^i)g_q(\chi_7^j)g_q(\chi_7^k)}{q}
\end{align} 
We get the same expression for all three Jacobi sums, hence, they are equal, and so are their complex conjugates. In particular, for $(i,j,k)=(1,2,4)$ this implies $\sigma_2(J_q(\chi_7,\chi_7^2))=J_q(\chi_7,\chi_7^2)$.
  
\paragraph{} From the discussions above we can conclude that the orbit of the set of Jacobi sums for the $Z_q(KQ;\, T)$ is $\{J_q(\chi_7^{2k}, \,\chi_7^{k})$, for $1\leq k\leq6\}$. Where $J_q(\chi_7^{2k},\chi_7^{k})$ is equal to $g_q(\chi_7^k)g_q(\chi_7^{2k})g_q(\chi_7^{4k})/q$, for which $\chi_7^k,\chi_7^{2k}$ and $\chi_7^{4k}$ play symmetric roles. Therefore, the local Zeta function for the Klein Quartic curve for $q\equiv 1\mod 7$ is 
\begin{equation}\label{Zeta KC} 
    Z_q(KQ;\,T)=\frac{\prod_{i=1}^6 (1+J_q(\chi_7^i,\,\chi_7^{2i})T)}{(1-T)(1-qT)}. 
\end{equation} 
This gives that for $q\equiv 1\mod 7$ the number of points on the Klein Quartic curve over $\mathbb{F}_q$  is  
\begin{equation}\label{points on KQ} 
     N_q(KQ)=q+1-\sum_{i=1}^6J_q(\chi_7^i,\chi_7^{2i}). 
\end{equation} 
Before moving ahead, we make some observations regarding the value of the Jacobi sum $J_q(\chi_7,\chi_7^{2})$.

\begin{lemma}\label{Jacobi sum congruence}
Let $\mathbb{F}_q$ be a finite field, such that $q\equiv1\mod 7$. Then for a fixed choice of primitive character $\chi_7\in\widehat{\mathbb{F}_q^\times}$,the Jacobi sum $J_q(\chi_7,\chi_7^2)=u+\sqrt{-7} v$ with $u,v\in \mathbb{Z}$, and $u\equiv -1\mod 7.$
\end{lemma}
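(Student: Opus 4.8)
The plan is to identify, in order, the field, the ring, and then the residue of $J := J_q(\chi_7,\chi_7^2)$ modulo $7$, using the prime $\lambda = 1-\zeta_7$ of $\mathbb{Z}[\zeta_7]$ lying over $7$. We already know $J\in\mathbb{Z}[\zeta_7]$, and in the paragraph preceding the lemma it was shown that $J$ is fixed by $\sigma_2$ (via $1+2+4\equiv 0\bmod 7$ and the identity $J = g_q(\chi_7)g_q(\chi_7^2)g_q(\chi_7^4)/q$). The first step is to observe that $\langle\sigma_2\rangle=\{\sigma_1,\sigma_2,\sigma_4\}$ is exactly the order-$3$ (index-$2$) subgroup of squares in $\mathrm{Gal}(\mathbb{Q}(\zeta_7)/\mathbb{Q})\cong(\mathbb{Z}/7\mathbb{Z})^\times$, whose fixed field is the unique quadratic subfield of $\mathbb{Q}(\zeta_7)$; since $7\equiv 3\bmod 4$ this subfield is $\mathbb{Q}(\sqrt{-7})$. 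As $\mathbb{Z}[\zeta_7]$ is the full ring of integers of $\mathbb{Q}(\zeta_7)$, taking $\sigma_2$-invariants gives $J\in\mathbb{Z}[\zeta_7]\cap\mathbb{Q}(\sqrt{-7})=\mathcal{O}_{\mathbb{Q}(\sqrt{-7})}=\mathbb{Z}[\tfrac{1+\sqrt{-7}}{2}]$.

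Next I would rule out half-integer coefficients, i.e. show $J=u+v\sqrt{-7}$ with $u,v\in\mathbb{Z}$. Writing $J=\tfrac{a+b\sqrt{-7}}{2}$ with $a\equiv b\bmod 2$, and using that $|J|=\sqrt{q}$ (from $J=g_q(\chi_7)g_q(\chi_7^2)/g_q(\chi_7^3)$ together with $|g_q(\chi)|=\sqrt{q}$), we get $a^2+7b^2=4q$; if $a,b$ were both odd then $a^2+7b^2\equiv 8\equiv 0\bmod 8$ would force $q$ even. For the odd prime powers $q\equiv 1\bmod 7$ that arise when reducing the Klein Quartic modulo a prime $p\neq 2,7$ this is impossible, so $a,b$ are even and $u:=a/2,\ v:=b/2\in\mathbb{Z}$. (Strictly, this integrality must be part of the hypothesis: over $\mathbb{F}_8$ one computes $J=\tfrac{5-\sqrt{-7}}{2}$, so the statement is to be read for $q$ odd — which is the only case used in the sequel.)

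For the congruence $u\equiv -1\bmod 7$, the idea is a $\lambda$-adic term count. Every nonzero value of a power of $\chi_7$ is a $7$th root of unity, hence $\equiv 1\pmod{\lambda}$; since $\chi_7(0)=\chi_7^2(0)=0$, the sum $J=\sum_{x\in\mathbb{F}_q}\chi_7(x)\chi_7^2(1-x)$ has exactly $q-2$ nonzero terms, so $J\equiv q-2\pmod{\lambda}$, and as $\lambda\mid 7\mid q-1$ this is $\equiv -1\pmod{\lambda}$. Complex conjugation fixes the ideal $(\lambda)$, so likewise $\bar J\equiv -1\pmod{\lambda}$, whence $2u=\mathrm{Tr}_{\mathbb{Q}(\sqrt{-7})/\mathbb{Q}}(J)=J+\bar J\equiv -2\pmod{\lambda}$. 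Since $2u+2$ is a rational integer and $(\lambda)\cap\mathbb{Z}=(7)$, this gives $7\mid 2u+2$, i.e. $u\equiv -1\bmod 7$.

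I expect the only real friction to be bookkeeping: correctly identifying the quadratic subfield of $\mathbb{Q}(\zeta_7)$ and its ring of integers $\mathbb{Z}[\tfrac{1+\sqrt{-7}}{2}]$, and the parity subtlety in the integrality step (the claim genuinely requires $q$ odd). By contrast the congruence is an easy reduction mod $\lambda$ once $J$ is recognized as a sum of $q-2$ roots of unity; no Stickelberger-type input is needed.
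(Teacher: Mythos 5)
Your proof is correct, and it splits the same way the lemma does: the first half (recognizing $J=J_q(\chi_7,\chi_7^2)$ as a $\sigma_2$-invariant algebraic integer, hence lying in the unique quadratic subfield $\mathbb{Q}(\sqrt{-7})$ of $\mathbb{Q}(\zeta_7)$, then excluding half-integral coordinates via the norm equation $a^2+7b^2=4q$) is essentially the paper's argument, with your mod-$8$ parity step making explicit what the paper compresses into the assertion $q\equiv 1\bmod 14$; your caveat that oddness of $q$ is genuinely needed (e.g.\ $q=8$ gives half-integral coordinates) is accurate and worth recording. The second half is where you take a genuinely different route. The paper first gets $u\equiv\pm1\bmod 7$ from $q=u^2+7v^2$ and then fixes the sign geometrically: for $q\equiv1\bmod 7$ the automorphism $\tau_{(2,1,4)}$ partitions the $\mathbb{F}_q$-points of the Klein Quartic with $xyz\neq0$ into orbits of size $7$, leaving three special points, so $N_q(KQ)\equiv 3\bmod 7$, which combined with the point-count formula in terms of the six Jacobi sums (three equal to $J$, three to $\overline{J}$) forces $u\equiv-1$. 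You instead reduce the character sum modulo $\lambda=1-\zeta_7$: the $q-2$ nonzero terms are $7$th roots of unity, each $\equiv 1\pmod{\lambda}$, so $J\equiv q-2\equiv-1\pmod{\lambda}$, and then the trace argument (or, even more directly, $\sqrt{-7}\in(\lambda)$) together with $(\lambda)\cap\mathbb{Z}=(7)$ gives $u\equiv-1\bmod 7$. Each approach buys something: yours is purely local at $7$ and self-contained --- it does not depend on the earlier identification of which Jacobi sums enter $Z_q(KQ;T)$, nor on the formula \eqref{points on KQ} (whose sign should be compared with the later computation $N_q(KQ)=q+1+\sum_i J_q(\chi_7^i,\chi_7^{2i})$; the paper's conclusion $u\equiv -1$ uses the latter), and it shows the congruence part survives even in characteristic $2$, where only the integrality statement fails; the paper's argument, by contrast, ties the congruence directly to the geometry and automorphisms of the Klein Quartic and feeds naturally into its verification of the congruence \eqref{point counting modulo q}.
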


\begin{proof}
Since $J_q(\chi_7,\chi_7^2)$ is a polynomial in $\zeta_7$ over $\mathbb{Z}$, it is an element in the ring of integers of $\mathbb{Q}(\zeta_7)$, i.e. $J_q(\chi_7,\chi_7^2)\in\mathbb{Z}[\zeta_7]$. Combining this with the fact that it is invariant under $\sigma_2$ implies that it lies in a degree $2$ extension of $\mathbb{Q}$. The only quadratic sub-field of  $\mathbb{Q}(\zeta_7)$ is $\mathbb{Q}(\sqrt{-7})$, hence we conclude that $J_q(\chi_7,\chi_7^2)\in\mathcal{O}_{\mathbb{Q}(\sqrt{-7})}$. Now, let $J_q(\chi_7,\chi_7^2)=U/2+\sqrt{-7}V/2$ with $U,V\in\mathbb{Z}$. Then we have $4q=U^2+7V^2$, with $q\equiv1\mod 14$; this holds if, and only if, $2 \mid U$ and $2\mid V$. Therefore, $J_q(\chi_7,\chi_7^2)\in \mathbb{Z}[\sqrt{-7}].$ \\
Now for the congruence part, let $J_q(\chi_7,\chi_7^2)=u+\sqrt{-7}v$, then the fact that $q=u^2+7v^2$ gives, $u\equiv\pm 1\mod7.$
Further notice that for $q\equiv1\mod 7$ and $(x,y)\in \mathbb{F}_q^{\times}$, if a point $(x,y,1)$ is on the Klein Quartic curve over $\mathbb{F}_q$, then $\{(\zeta_7^{2k}x,\zeta_7^{k}y,z)\mid 1\leq k \leq7 \}$ lie on the curve. Therefore, points on the curve with $x,y,z\neq 0$ exists as a set of $7$ elements, other than this, there are $3$ points namely $(0,0,1),(1,0,0),$ and $(0,1,0)$. This gives
\begin{equation}\label{KQ points q=1 congruence}
    N_q(FC_7)\equiv 3\mod 7,
\end{equation}
 and hence $u\equiv -1\mod 7.$
\end{proof}

\paragraph{}Let us verify that for a prime $p\equiv1\mod 7$, the expression for $N_p(KQ)$ \eqref{points on KQ}, satisfies the congruence in the equation \eqref{point counting modulo q}. Equivalently, we want $-(J_p(\chi_7,\chi_7^2)+\overline{J_p(\chi_7+\chi_7^2)})\equiv \displaystyle \binom{p-1}{\frac{p-1}{7},\frac{2(p-1)}{7},\frac{4(p-1)}{7}} \mod  p$.
From lemma \ref{Jacobi sum congruence}, we have $J_p(\chi_7,\chi_7^2)=u+v\sqrt{-7}$, for $x,y\in \mathbb{Z}$, and hence $-(J_p(\chi_7,\chi_7^2)+\overline{J_p(\chi_7,\chi_7^2)})=-2u$. Whereas, from  \cite{Hudson-Williams}, we have $\displaystyle\binom{\frac{3(p-1)}{7}}{\frac{p-1}{7}}\equiv -2u \mod p$. Now observe
\begin{align} 
    \binom{p-1}{\frac{p-1}{7},\frac{2(p-1)}{7},\frac{4(p-1)}{7}}&\equiv \frac{(p-1)!}{\frac{p-1}{7}!\frac{2(p-1)}{7}!\frac{4(p-1)}{7}!}&\mod p\\ 
    &\equiv\binom{\frac{3(p-1)}{7}}{\frac{p-1}{7}}\frac{(p-1)!}{ \frac{3(p-1)}{7}! \frac{4(p-1)}{7}!}&\mod p\\ 
    &\equiv -2u&\mod p.
\end{align} 
Hence, the congruence relation \eqref{point counting modulo q} holds.\\

We looked at the Klein Quartic curve as a quotient of degree $7$ Fermat curve to find the local Zeta function, One can also perform the point counting over finite fields for any birationally equivalent form to obtain the Zeta function. In fact, in \cite{Elkies}, Elkies shows that the affine Klein Quartic curve is birationally equivalent over $\mathbb{Q}$ to the curve $y^7=x^2(x+1)$. Therefore, their projective closure will have the same number of points over any finite field, and hence, they have the same local Zeta function. For $q\equiv1\mod 7$, 
\begin{align*} 
N_{\mathbb{F}_q}(KQ)=1+\sum_{i=1}^7\sum_{x\in\mathbb{F}_q}\chi_7^{2i}(x)\chi_7^i(1-x)=1+q+\sum_{i=1}^6J_q(\chi_7^i,\chi_7^{2i}) 
\end{align*} 
This again confirms our calculation for the local Zeta function at primes powers, $q\equiv1\mod 7$.

\section{Modular forms in the background} 
  
\paragraph{} In \cite{Elkies}, Elkies gives the decomposition of the Jacobian of the Klein Quartic curve in terms of elliptic curves, Alternatively, we look at the Galois representation to find if there are any modular forms in the background.
Let $K=\mathbb{Q}(\zeta_7)$ be the cyclotomic extension of degree $6$ over $\mathbb{Q}$. The idea is to give a one-dimensional representation of the Galois group $G_{\mathbb{Q}(\zeta_7)}$; extend it to a representation of the absolute Galois group $G_{\mathbb{Q}}$ using the method of induced representations \cite{Curtis-Reiner}, and then look for the automorphic forms associated to the Galois representation of the group $G_{\mathbb{Q}}$. 
  
\paragraph{} 
Due to Weil's theory \cite{Weil-Jacobi-sums}, Jacobi sums can be considered as Hecke characters, also described in \cite{Long-Tu}. 
More precisely, for a fixed positive integer $M$, let $\mathfrak{p}$ be an unramified maximal ideal in $\mathbb{Z}[\zeta_M]$, and $q(\mathfrak{p})=|\mathbb{Z}[\zeta_M]/\mathfrak{p}|$. Then for fixed choice of rational numbers $a$ and $b$,  such that $a(q(\mathfrak{p})-1)$ and $b(q(\mathfrak{p})-1)$ are integers, the function from the set of all fractional ideals of $\mathbb{Z}[\zeta_M]$, coprime to $M$, to $\mathbb{C}^\times$, defined by the following formula, is a Hecke character of modulus $M^2$ (see \cite{Weil-Jacobi-sums}) 
$$\mathcal{J}_{a,b}(\mathfrak{p})=-J_{q(\mathfrak{p})}(\chi_{{\mathfrak{p}},M}^{a},\chi_{\mathfrak{p},M}^{b}).$$ 
Here $\chi_{\mathfrak{p},M}$ is a primitive character of order $M$ in the residue field $\mathbb{Z}[\zeta_M]/\mathfrak{p}$, defined as  
  
\begin{equation}\label{n-th residue symbol} 
     \chi_{\mathfrak{p},M}(x)\equiv x^{(q(\mathfrak{p})-1)/M} \mod \mathfrak{p},\;\;\forall\, x\in \mathbb{Z}[\zeta_M] .     
\end{equation}

In terms of Galois representation, this means for any fixed prime $\ell$, there is a one-dimensional Galois representation $G_{\mathbb{Q}(\zeta_M)}\to \overline{\mathbb{Q}_\ell}$, unramified for primes $\mathfrak{p}$ not dividing $M$ is given as $$\rho_{a,b}(\text{Frob}_{\mathfrak{p}})=-J_{q(\mathfrak{p})}(\chi_{\mathfrak{p}, M}^{a},\chi_{\mathfrak{p}, M}^{b}),$$ 
where $\text{Frob}_{\mathfrak{p}}$ is the Frobenius element associated with the ideal $\mathfrak{p}$ (see section \ref{Grep}).

\paragraph{} 
In the case of the Klein Quartic curve, the Hecke character associated with the Jacobi sum $J_{q(\mathfrak{p})}(\chi_{\mathfrak{p},7},\chi_{\mathfrak{p},7}^2)$ has modulus $(49)$ (not necessarily primitive).  Weil in his paper \cite{Weil-Jacobi-sums} also remarks that the conductor for Jacobi sum character in $\mathbb{Q}(\zeta_M)$ is  either $(1-\zeta_M)$ or $(1-\zeta_M)^2$, for an odd prime $M$.  A quick check based on proposition \ref{weight of Hecke character} reveals that the weight of our Hecke character is $1$. Using this and the fact that the Jacobi sums under consideration are invariant under $\sigma_2$, implies that for a maximal ideal $\mathfrak{p}=(\alpha_\mathfrak{p})$, ${}_h\chi(\mathfrak{p})=\phi(\alpha_\mathfrak{p})\alpha_\mathfrak{p}^\theta$. Where $\theta$ is either  $(I+\sigma_2+\sigma_4)$ or $(\sigma_3+\sigma_5+\sigma_6)$, with the value of $\theta$ invariant of $\mathfrak{p}$. And from \ref{Jacobi sum congruence} we get $\phi(\alpha_\mathfrak{p})=1$ for $\alpha_\mathfrak{p}^\theta\equiv1\mod(1-\zeta_7)$, and $-1$ otherwise. This can be extended to any ideal $(\alpha)$ coprime to $(1-\zeta_7)$, which gives ${}_h\chi((\alpha))=\alpha^\theta$ for $\alpha\equiv1\mod(1-\zeta_7)$. Hence, we conclude that the conductor of our character is $(1-\zeta_7).\textit{}$

\paragraph{}The one-dimensional Galois representation $\rho: G_{\mathbb{Q}(\zeta_7)}\to \overline{\mathbb{Q}_\ell}$ on the Frobenius elements of maximal ideals $\mathfrak{p}$ coprime to $7$ is given as 
  
\begin{equation}\label{1-dim rep} 
    \rho(\text{Frob}_{\mathfrak{p}})=-J_{q(\mathfrak{p})}(\chi_{\mathfrak{p},7},\chi_{\mathfrak{p},7}^2). 
\end{equation} 
We know from Modularity theorem \cite{Diamond-Shurman}, that an irreducible $2$-dimensional, odd (i.e. for complex conjugate automorphism $\sigma^c$, $det(\rho(\sigma^c)) =-1$), continuous  $G_{\mathbb{Q}}$-representations that are ramified at only finitely many places arises from modular forms.  
To find out the associated automorphic forms, we induce the one-dimensional representation $\rho$ to the absolute Galois group $G_{\mathbb{Q}}$. We do this in two steps: first, we induce it to a  
$2$-dimensional representation of the group $G_{\mathbb{Q}(\zeta_7+\overline{\zeta_7})}$, say $\psi$, and finally induce $\psi$ to  a $6$-dimensional representation of $G_{\mathbb{Q}}$. 
\[ 
\newcommand{\ext}[1]{
  \hphantom{\scriptstyle#1}\bigg|{\scriptstyle#1}%
} 
\begin{array}{@{}c@{}} 
\overline{\mathbb{Q}} \\ 
\ext{} \\ 
\mathbb{Q}(\zeta_7) \\ 
\ext{2} \\ 
\mathbb{Q}(\zeta_7+\overline{\zeta_7}) \\ 
\ext{3} \\ 
\mathbb{Q} 
\end{array} 
\] 
Whether the induced representation  
$$\psi:=\text{Ind}_{G_{\mathbb{Q}(\zeta_7)}}^{G_{\mathbb{Q}(\zeta_7+\overline{\zeta_7})}}\rho$$ 
is irreducible or not, is enough to checking that $\rho$ and its complex conjugate, denote by $\rho^c$, are not equivalent as representations of $G_{\mathbb{Q}(\zeta_7+\overline{\zeta_7})}$ \cite{Curtis-Reiner} . Let $\sigma^c$ denote the complex conjugation, which is the non-trivial representative of the coset $G_{\mathbb{Q}(\zeta_7+\overline{\zeta_7})}/G_{\mathbb{Q}(\zeta_7)} $. Then $\rho^c(x)=\rho(\sigma^cx\sigma^c)$. Since $\rho$ and $\rho^c$ are $1$-dimensional representations, to check that they are inequivalent is enough to check that they disagree on any one element. We have $\sigma^c\text{Frob}_{\mathfrak{p}}\sigma^c=\text{Frob}_{\mathfrak{p}^c}$ (see section \ref{Grep}); therefore, $$\rho^c(\text{Frob}_{\mathfrak{p}})=\rho(\sigma^c\text{Frob}_{\mathfrak{p}}\sigma^c)=\rho(\text{Frob}_{\mathfrak{p}^c})=-\overline {J_{q(\mathfrak{p})}(\chi_{\mathfrak{p},7},\chi_{\mathfrak{p},7}^2) }.$$ 
By direct calculation, we verify that the Jacobi sum for $p=29$ is not a real number. This proves that $\rho^c$ and $\rho$ are inequivalent, and hence $\psi$ is irreducible. 
  
\paragraph{} Next we induce $\psi$ to the group $G_{\mathbb{Q}}$. We have $G_\mathbb{Q}/G_{\mathbb{Q}(\zeta_7+\overline{\zeta_7})}=\{1,\sigma_2,\sigma_2^2\}$. Let  
$$\theta:=\text{Ind}_{G_{\mathbb{Q}(\zeta_7+\overline{\zeta_7})}}^{G_{\mathbb{Q}}}\psi$$ 
be the induced $6$-dimensional representation of the absolute Galois group. We claim that this is a direct sum of three irreducible $2$-dimensional representations. To see this, it is sufficient to show that the characters of the conjugate representations of $\psi$ agree on the Frobenius elements. The conjugate representations to $\psi$ are $\psi^{\sigma_2}$ and $\psi^{\sigma_2^2}$, and their action on an element is given by the action of $\psi$ on the conjugate of that element by $\sigma_2$ and $\sigma_2^2$, respectively. This gives 
\begin{align*} 
    \chi_{\psi}(\text{Frob}_{\mathfrak{p}})&=-(J_{q(\mathfrak{p})}(\chi_{\mathfrak{p},7},\chi_{\mathfrak{p},7}^2)+ \overline{J_{q(\mathfrak{p})}(\chi_{\mathfrak{p},7},\chi_{\mathfrak{p},7}^2}));\\ 
    \chi_{\psi^{\sigma_2}}(\text{Frob}_{\mathfrak{p}})&=-(J_{q(\mathfrak{p})}(\chi_{\mathfrak{p},7}^2,\chi_{\mathfrak{p},7}^4)+ \overline{J_{q(\mathfrak{p})}(\chi_{\mathfrak{p},7}^2,\chi_{\mathfrak{p},7}^4}));\\ 
    \chi_{\psi^{\sigma_2^2}}(\text{Frob}_{\mathfrak{p}})&=-(J_{q(\mathfrak{p})}(\chi_{\mathfrak{p},7}^4,\chi_{\mathfrak{p},7})+ \overline{J_{q(\mathfrak{p})}(\chi_{\mathfrak{p},7}^4,\chi_{\mathfrak{p},7}})). 
\end{align*} 
From the discussion in section \ref{Zeta function of the Klein Quartic curve}, all three  sums in the above equations are equal to $J_{q(\mathfrak{p})}(\chi_{\mathfrak{p},7},\chi_{\mathfrak{p},7}^2,\chi_{\mathfrak{p},7}^4)+ \overline{J_{q(\mathfrak{p})}(\chi_{\mathfrak{p},7},\chi_{\mathfrak{p},7}^2,\chi_{\mathfrak{p},7}^4)}$. From this we know that $\psi$ is extendable to $G_{\mathbb{Q}}$, and let $\Tilde{\psi}$ be the trivial extension (i.e. $\Tilde{\psi}(\sigma_2^ih)=\psi(h)$ for $i\in\{1,2,3\}$ and $h\in G_{\mathbb{Q}_{\zeta_7+\overline{\zeta_7}}}$). The inverse of an open set $U$ under the map $\Tilde{\psi}$ is given as,  $\Tilde{\psi}^{-1}(U)=\cup_{i=0}^2(\sigma_2^{i}\psi^{-1}(U))$, which is a union of open sets, and hence the extension is continuous.  This gives that the induced representation $\theta$ is a direct sum of three irreducible $2$-dimensional representations, 
$$\theta=(\Tilde{\psi}\otimes \epsilon)\oplus (\Tilde\psi\otimes \eta)\oplus (\Tilde\psi\otimes\eta^2), $$ 
with $\Tilde\psi$ being the trivial extension of $\psi$ to $G_\mathbb{Q}$, and $\eta$ is an order $3$ character of the group $G_{\mathbb{Q}} $  with kernel $ {G_{\mathbb{Q}(\zeta_7+\overline{\zeta_7})}}$ (see \cite{Curtis-Reiner}). \\
\\
\textbf{Remark}: Note that, although \cite{Curtis-Reiner} deals with finite groups, the results extend naturally to infinite groups as long as the index of the subgroup is finite.

\paragraph{}We have three $2$-dimensional irreducible representations of $G_{\mathbb{Q}}$; therefore, there are three newforms associated to them \cite[Chapter~8]{Diamond-Shurman}. Comparing the $L$-function of the Hecke character to the product of $L$-functions of three modular forms reveals that the product of levels of newforms is $D(\mathbb{Q}(\zeta_7)/\mathbb{Q})N(1-\zeta_7)=7^6$. Since there are no new forms of level $7$ and weight $2$, we are looking at three for weight-2 CM modular forms of level $49$.  These are modular forms with the CM field $\mathbb{Q}(\sqrt{-7})$. 
  
\paragraph{} Denote by $S$  the set of three newforms of weight $2$, level $49$ and CM field $\mathbb{Q}(\sqrt{-7})$. We claim now and establish in the next section that the $L$-function of the Klein Quartic curve is the product of the $L$-function of the three new forms in the set $S$. In terms of the local Zeta function, this means that the numerator of the local Zeta function of Klein Quartic comes as a product of inverses of the Euler $p$ factor of the associated modular forms, i.e.,   
\begin{equation}\label{MF equation} 
    Z_p(KQ;T)=\frac{\prod_{f\in S}(1-a_p(f)T+\chi_f(p)\cdot pT^2) }{(1-T)(1-pT)}, 
\end{equation} 
where $\chi_f$ can be taken as an order $3$ multiplicative character of conductor $7$ in $(\mathbb{Z}/49\mathbb{Z})^\times$, with $\chi_f(3)=\exp(4\pi\iota/3)$ (LMFDB label $7.c$).  
  
\section{Proof of Theorem \ref{theorem 1}} 
  
We will show that the product of inverses of the Euler $p$ factor of the newforms in S is equal to the numerator of the local Zeta function of the Klein Quartic curve for all unramified primes, i.e., $p\neq 7$.
The product of the $L$-function of newforms in the set $S$ is the $L$-function of the Galois representation, $L_{\mathbb{Q}}(\theta,s)$. Let $K=\mathbb{Q}(\zeta_7)$, then by Langland's base change lifting theorem \cite{Langlands}, we have $L_{\mathbb{Q}}(\theta,s)=L_{K}(\rho,s)$, which is

\begin{align} 
&=\prod_{\substack{\mathfrak{p}\in \mathcal{O}_K \\ \mathfrak{p}\nmid 7}}\frac{1}{1-(N_{K/\mathbb{Q}}(\mathfrak{p}))^{-s}\rho(\text{Frob}_\mathfrak{p})}\\ 
   &=\prod_{\substack{\mathfrak{p}\in \mathcal{O}_K \\ \mathfrak{p}\nmid 7}}\frac{1}{1+(p^r)^{-s} J_{p^r}(\chi_{7,\mathfrak{p}}, \chi_{7,\mathfrak{p}}^2)},   
\end{align} 
with $r$ being the degree of the residue field $\mathbb{Z}[\zeta_7]/\mathfrak{p}$. 
  From this, we can read that the product of inverses of Euler $p$-factors for newforms in $S$, at prime $p$, in the variable $T$ for $p^{-s}=T$ is   
  \begin{equation}\label{Euler product} 
      \prod_{\mathfrak{p}\mid p}  ( 1 +J_{p^r}(\chi_{7,\mathfrak{p}}, \chi_{7,\mathfrak{p}}^2 )T^{r}). 
  \end{equation}

\paragraph{} Let $p\neq7$ be a prime, such that $p$ has order $r$ modulo $7$, i.e. $p^r\equiv1\mod7$, then 
$$N_{p^{r}}(KQ)=p^{r}+1+\sum_{i=1}^6J_{p^{r}}(\chi_7^i,\chi_7^{2i}).$$ 
Suppose $r$ is even, that is, $2$ or $6$. According to lemma \ref{Jacobi sum congruence}, all $6$ Jacobi sums have the same value, precisely $p^{r/2}$; therefore, $\alpha_i$'s are $r$-th roots of $p^{r/2}$. Combining this with the fact that $\sum_{i=1}^6\alpha_i^k=0$ for $k\in\{1,...,r-1\}$ (refer section \ref{section point counting modulo p}), gives  
$$\prod_{i=1}^6(1-\alpha_iT)=(1+J_{p^r}(\chi_7,\chi_7^2)T^{r})^{6/r}=(1+p^{r/2}T^r)^{6/r}.$$ 
This matches the equation \eqref{Euler product}.\\ 
\\ 
For $r=1$, it is already established from the discussions in earlier sections.  For $r=3$, we have $J_{p^3}(\chi_7,\chi_7^2)\neq\overline{J_{p^3}(\chi_7,\chi_7^2)}$. With $3$ out of $6$ $\alpha_i$'s satisfying $\alpha_i^3=J_{p^3}(\chi_7,\chi_7^2)$, and the rest $3$ satisfying $\alpha_i^3=\overline{
J_{p^3}(\chi_7,\chi_7^2)}$. Again, combining it with the fact that $\sum_{i=1}^6\alpha_i^k=0$ for $k=1$ and $2$, gives 
$$\prod_{i=1}^6(1-\alpha_iT)=(1+J_{p^3}(\chi_7,\chi_7^2)T^{3})(1+\overline{J_{p^3}(\chi_7^3,\chi_7^6)}T^{3}),$$
which is equal to the expression in equation \ref{Euler product}.
This completes the proof of the Theorem \ref{theorem 1} and the description of the local Zeta function of the Klein Quartic curve for all the unramified primes in $\mathbb{Q}(\sqrt{-7})$.

\paragraph{} The Hasse-Weil $L$-function of a variety is defined by multiplying $\zeta(s)\zeta(s-1)$ to the product of the inverses of the numerator of its local Zeta function for all primes. Where $\zeta(s)$ is the usual Riemann Zeta function. Theorem \ref{theorem 1} shows that the Hasse-Weil $L$-function of the Klein Quartic curve comes as a product of $L$-functions of the three newforms described above. Therefore, we also note that it has an analytic continuation to the entire complex plane.  
  
\paragraph{}\textit{\bf Acknowledgements.} The author thanks Prof. Ling Long for introducing the problem. Additionally, the author thanks her, Prof.~Fang-Ting Tu, and Emma Lien for suggesting corrections and comments on the earlier drafts of this article. The author also thanks the anonymous referee(s) for their valuable comments, which greatly helped to improve this article. Lastly, the author is also grateful to the College of Science and the Department of Mathematics at Louisiana State University for the financial support of this research during the summer of 2023.

  \nocite{*}
 \bibliographystyle{plain}\bibliography{main}
  
\end{document}